\numberwithin{equation}{section}
\theoremstyle{plain}
\newtheorem{theorem}[subsection]{Theorem}
\newtheorem{lemma}[subsection]{Lemma}
\newtheorem{prop}[subsection]{Proposition}
\theoremstyle{definition}
\newtheorem{defn}[subsection]{Definition}
\newtheorem{remark}[subsection]{Remark}
\newtheorem{exam}[subsection]{Example}
\newtheorem{ques}[subsection]{Question}
\def\AA{\mathbb{A}}
\def\CC{\mathbb{C}}
\def\FF{\mathbb{F}}
\def\GG{\mathbb{G}}
\def\QQ{\mathbb{Q}}
\def\RR{\mathbb{R}}
\def\ZZ{\mathbb{Z}}
\newcommand\cC{\mathcal{C}}
\newcommand\cD{\mathcal{D}}
\newcommand\cF{\mathcal{F}}
\newcommand\cH{\mathcal{H}}
\newcommand\cL{\mathcal{L}}
\def\bD{\mathbf{D}}
\def\bO{\mathbf{O}}
\def\bT{\mathbf{T}}
\newcommand\fra{\mathfrak{a}}
\newcommand\fo{\mathfrak{o}}
\newcommand\tilW{\widetilde{W}}
\newcommand{\can}{\textup{can}}
\newcommand\id{\textup{id}}
\newcommand\pt{\textup{pt}}
\newcommand\res{\textup{res}}
\newcommand\rs{\textup{rs}}
\newcommand\Span{\textup{Span}}
\newcommand\Aut{\textup{Aut}}
\newcommand\Hom{\textup{Hom}}
\newcommand\SL{\textup{SL}}
\newcommand{\Gm}{\GG_m}
\newcommand{\Ad}{\textup{Ad}}
\newcommand\xch{\mathbb{X}^*}
\newcommand\xcoch{\mathbb{X}_*}
\newcommand{\incl}{\hookrightarrow}
\newcommand{\isom}{\stackrel{\sim}{\to}}
\newcommand{\Qlbar}{\overline{\QQ}_\ell}
\renewcommand{\j}[1]{\langle{#1}\rangle}
\newcommand{\wt}[1]{\widetilde{#1}}
\newcommand{\wh}[1]{\widehat{#1}}
\newcommand\quash[1]{}
\newcommand\mat[4]{\left(\begin{array}{cc} #1 & #2 \\ #3 & #4 \end{array}\right)}  
\newcommand\un{\underline}
\newcommand{\ov}{\overline}
\newcommand{\bs}{\backslash}
\newcommand\xr{\xrightarrow}
\newcommand\ot{\otimes}
\newcommand{\cohog}[2]{\textup{H}^{#1}({#2})}     
\newcommand{\cohoc}[2]{\textup{H}_{c}^{#1}({#2})}     
\renewcommand\a\alpha
\renewcommand\b\beta
\newcommand\g\gamma
\renewcommand\d\delta
\newcommand\D\Delta
\newcommand{\e}{\epsilon}
\renewcommand{\th}{\theta}
\newcommand{\ph}{\varphi}
\newcommand{\s}{\sigma}
\renewcommand{\t}{\tau}
\newcommand{\y}{\eta}
\newcommand{\z}{\zeta}
\newcommand{\ep}{\epsilon}
\renewcommand{\l}{\lambda}
\renewcommand{\L}{\Lambda}
\newcommand{\om}{\omega}
\newcommand{\Om}{\Omega}
\newcommand\nb{\nabla}
\renewcommand{\c}{\circ}
\newcommand{\dw}{\dot{w}}
\title{Higher signs for Coxeter groups}
\dedicatory{}
\author{Zhiwei Yun}
\thanks{Partially supported by the Packard Foundation.}
\address{Department of Mathematics, Massachusetts Institute of Technology, 77 Massachusetts Ave, Cambridge, MA 02139}
\email{zyun@mit.edu}
\date{}
\subjclass[2010]{20F55, 14M15}
\keywords{}
\begin{document}

\begin{abstract}
We define and study cocycles on a Coxeter group in each degree generalizing the sign function. When the Coxeter group is a Weyl group, we explain how the degree three cocycle arises naturally from geometry representation theory.
\end{abstract}

\maketitle

\tableofcontents

\section{Introduction}
Throughout these notes, we fix a Coxeter group $(W,S)$ with length function $|\cdot|:W\to\ZZ_{\ge0}$.  The group homomorphism $\e:W\to \{\pm1\}$ given by $w\mapsto (-1)^{|w|}$ is called the {\em sign} for $W$. The sign of $W$ can be viewed as a $1$-cocycle of $W$ valued in the group $\{\pm1\}\cong\FF_{2}$. In these notes, for every positive integer $n$,  we define and study a canonical $n$-cocycle of $W$  valued in $\ZZ$ or $\FF_{2}$ depending on the parity of $n$.

The main result is the following.
\begin{theorem}\label{th:sign} Let $(W,S)$ be a Coxeter group. For any even (resp. odd) integer $n\ge1$, there is a unique $n$-cocycle $\e^{W}_{n}\in Z^{n}(W,\ZZ)$ (resp. $\e^{W}_{n}\in Z^{n}(W,\FF_{2})$)  satisfying two conditions
\begin{enumerate}
\item If $n\ge2$ and $(x_{1},x_{2},\cdots, x_{n})\in W^{n}$ satisfy $|x_{i}x_{i+1}|=|x_{i}|+|x_{i+1}|$ for some $1\le i\le n-1$, then $\e^{W}_{n}(x_{1},\cdots, x_{n})=0$. 
\item For any $s\in S$, $\e^{W}_{n}(s,s,\cdots, s)=1$.
\end{enumerate}
\end{theorem}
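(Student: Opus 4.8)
The plan is to derive uniqueness from an explicit reduction procedure, and then to obtain existence by proving that this very procedure is consistent. Throughout I work in the inhomogeneous bar complex: an $n$-cochain is a function $c\colon W^{n}\to A$ (with $A=\ZZ$ or $\FF_{2}$, trivial action), with coboundary
\[
(\delta c)(x_{0},\dots,x_{n})=c(x_{1},\dots,x_{n})+\sum_{i=1}^{n}(-1)^{i}c(x_{0},\dots,x_{i-1}x_{i},\dots,x_{n})+(-1)^{n+1}c(x_{0},\dots,x_{n-1}).
\]
For $n=1$ the cocycle condition says $c$ is a homomorphism and (2) forces $c(s)=1$, so $\e^{W}_{1}$ is the sign written additively; for $n=2$ one checks directly that $\e^{W}_{2}(x,y)=\tfrac12(|x|+|y|-|xy|)$ is a $\ZZ$-valued $2$-cocycle satisfying (1) and (2). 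Already these low cases suggest that $\e^{W}_{n}$ cannot be assembled from cup products of $|\cdot|\bmod 2$ and $\e^{W}_{2}$ (any such product fails (1)), so for $n\ge 3$ a genuine construction is required.

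For uniqueness, let $c\in Z^{n}(W,A)$ satisfy (1) and (2); I would show that $c$ is determined by the numbers $c(s,\dots,s)$. Write $D_{L}(x),D_{R}(x)$ for the left and right descent sets of $x\in W$, and call a tuple \emph{tight} if all entries are $\ne e$ and $|x_{i}x_{i+1}|<|x_{i}|+|x_{i+1}|$ for all $i$; by (1) it suffices to pin down $c$ on tight tuples. I would induct on $(L,\iota)$, ordered lexicographically, where $L=\sum_{i}|x_{i}|$ and $\iota=\min\{i:|x_{i}|\ge 2\}$ (with $\iota=n+1$ if there is no such $i$). If $\iota=n+1$ then tightness forces $x_{1}=\dots=x_{n}\in S$, the base case. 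Otherwise put $j=\iota$, so $x_{1},\dots,x_{j-1}\in S$; pick $s\in D_{R}(x_{j})\cap D_{L}(x_{j+1})$ when $j\le n-1$ (this set is nonempty since $|x_{j}x_{j+1}|<|x_{j}|+|x_{j+1}|$) and any $s\in D_{R}(x_{n})$ when $j=n$, and set $x_{j}'=x_{j}s$, so $|x_{j}'|=|x_{j}|-1\ge 1$. Applying $\delta c=0$ to the refined tuple $(x_{1},\dots,x_{j-1},x_{j}',s,x_{j+1},\dots,x_{n})\in W^{n+1}$, the term merging $x_{j}'$ with $s$ reconstitutes $x_{j}$ and hence equals $\pm c(x_{1},\dots,x_{n})$, while each other term either deletes the first or last entry ($L$ drops), merges an adjacent pair internal to the original tight tuple ($L$ drops by tightness), merges $s$ with $x_{j+1}$ ($L$ drops since $s\in D_{L}(x_{j+1})$), or merges $x_{j-1}$ with $x_{j}'$. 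This last term is the delicate one: if $x_{j-1}\in D_{L}(x_{j}')$ then $L$ drops, while if not then $|x_{j-1}x_{j}'|=|x_{j}|$, so $L$ is unchanged but the initial run of simple reflections lengthens and $\iota$ decreases to $j-1$. Thus every term besides $c(x_{1},\dots,x_{n})$ has strictly smaller $(L,\iota)$, and solving for $c(x_{1},\dots,x_{n})$ closes the induction.

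For existence, the plan is to \emph{define} $\e^{W}_{n}$ by the same recipe: it is $0$ off the tight tuples, equal to $1$ on $(s,\dots,s)$, and on a tight tuple with $L>n$ it is the expression obtained by solving $\delta(\text{refined tuple})=0$ for the chosen $(j,s)$. The lexicographic induction above shows this is a well-defined function, and conditions (1), (2) then hold by construction. Two things remain: (i) the value is independent of which valid descent $s$ is chosen, and (ii) the resulting cochain is actually a cocycle, i.e.\ $\delta\e^{W}_{n}=0$ on all of $W^{n+1}$, not just on the refined tuples used to build it. I would prove both by a confluence argument in the spirit of Newman's lemma: any two refinements of a tuple admit a common refinement, and any two right descents $s,s'$ of $x_{j}$ generate a finite dihedral parabolic through which both reductions can be factored, with the Matsumoto--Tits analysis of how reduced words change under right multiplication by a simple reflection handling the bookkeeping.

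The main obstacle is step (ii): showing that the system of coboundary relations defining the values is not over-determined, i.e.\ that the forced values genuinely satisfy every instance of the cocycle identity. The crux is controlling the ``length-neutral'' merges --- those across a length-additive junction, where $L$ is preserved --- since these are exactly the terms a single induction on $L$ cannot absorb; this is why the secondary parameter $\iota$ and the precise choice $s\in D_{R}(x_{j})\cap D_{L}(x_{j+1})$ must be tracked carefully. An alternative I would also pursue is to construct $\e^{W}_{n}$ on a small free resolution of $\ZZ$ over $\ZZ[W]$ adapted to the length filtration, so that condition (1) becomes the statement that $\e^{W}_{n}$ is inflated from an explicit quotient complex; existence would then reduce to checking that the cycle represented by $(s,\dots,s)$ survives to a $\ZZ$- (resp.\ $\FF_{2}$-) valued functional, a finite computation local to each $s\in S$.
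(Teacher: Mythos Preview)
Your uniqueness argument is the same double induction as the paper's (on total length $L$, then on the position of the first non-simple entry), but one step is wrong: you assert that $|x_jx_{j+1}|<|x_j|+|x_{j+1}|$ forces $D_R(x_j)\cap D_L(x_{j+1})\neq\varnothing$. This is false already in $S_3$: with $x_j=x_{j+1}=s_1s_2$ one has $D_R(s_1s_2)=\{s_2\}$ and $D_L(s_1s_2)=\{s_1\}$, yet $(s_1s_2)^2=s_2s_1$ has length $2<4$. Without such an $s$, the merge $sx_{j+1}$ need not drop $L$; the resulting tuple $(x_1,\dots,x_{j-1},x_j',sx_{j+1},\dots,x_n)$ then has the \emph{same} $(L,\iota)$ (when $|x_j|\ge 3$), so the induction stalls. (There is also an expository slip: in the length-neutral merge $x_{j-1}x_j'$ the initial run of simple reflections \emph{shortens}, not lengthens, consistently with ``$\iota$ decreases to $j-1$''.) The paper avoids the descent-set problem by splitting off a \emph{left} descent instead: since the cocycle is collapsing one may assume $x_1=\cdots=x_{\mu-1}=s\in S$ with $s\in D_L(x_\mu)$ (otherwise the value is already zero), write $x_\mu=sx'_\mu$, and apply $\delta\xi=0$ to $(s,\dots,s,x'_\mu,x_{\mu+1},\dots,x_n)$ with $\mu$ initial $s$'s. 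The only length-neutral survivor is then the merge $x'_\mu x_{\mu+1}$, whose $\mu$-invariant is at least $\mu+1$, and \emph{downward} induction on $\mu$ (base $\mu=n$) closes.

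For existence you have only a plan, and the hard part --- showing the recursively defined values satisfy \emph{every} coboundary identity, not just the ones used to build them --- is not addressed beyond an appeal to confluence. The paper bypasses consistency altogether by an explicit construction: it writes down a cocycle $Z^W_n$ valued in the free abelian group on reflecting half-spaces of the Tits cone, the coefficient of $[D]$ being $\prod_{i=0}^{n-1}(v_D(C_i)-v_D(C_{i+1}))$ with $C_i=x_1\cdots x_iC_0$; checks $\delta Z^W_n=0$ by a short telescoping identity in the integers $h_i=v_D(C_i)$; and obtains $\e^W_n$ as the image under a $W$-invariant map $\ZZ[\cD]^{\pm}\to\ZZ$ (resp.\ $\FF_2$). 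The collapsing property and $\e^W_n(s,\dots,s)=1$ then drop out immediately from the combinatorics of which walls separate consecutive chambers.
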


We will construct a cocycle universal among those satisfying the first condition above (called {\em collapsing})  (see Theorem \ref{th:univ}).   We will give formulas for computing  these cocycles in \S\ref{s:signs}. 

We will study $\e^{W}_{3}$ in more details and explain how $\e^{W}_{3}$ shows up naturally in geometric representation theory (see \S\ref{s:G}). It is natural to ask:
\begin{ques}
Do the higher cocycles $\e^{W}_{n}$ ($n\ge4$) show up in  geometric representation theory (e.g., Hecke categories)?
\end{ques}

\subsection{Notation}\label{ss:not} For a set $X$, $\ZZ[X]$ denotes the free abelian group with basis $\{[x]|x\in X\}$.

For an integer $n$, we denote
\begin{equation*}
\e(n)=\begin{cases} +, & \mbox{ if $n$ is even}; \\ -, & \mbox{ if $n$ is odd}.\end{cases}
\end{equation*}

When $W$ is a Weyl group, we adopt the following notation. Let $G$ be semisimple group over an algebraically closed field $k$ with a Borel subgroup $B$ and maximal  torus $T\subset B$ such that the corresponding Weyl group is $W$ with simple reflections $S$ given by $B$. 

\section{Universal collapsing cocycle}

\begin{defn} Let $A$ be a $\ZZ[W]$-module and $n\ge2$ be an integer. A cocycle $\z\in Z^{n}(W,A)$ is called {\em collapsing} if, for any $x_{1},\cdots ,x_{n}\in W$, whenever $|x_{i}x_{i+1}|=|x_{i}|+|x_{i+1}|$ for some $1\le i\le n-1$, we have $\z(x_{1},\cdots,x_{n})=0$.
\end{defn}

\begin{exam}[Tits section] Suppose $(W,S)$ is a Weyl group (see \S\ref{ss:not} for notation). Tits defines a section to $\pi: N_{G}(T)\to W$ as follows. For any $s\in S$, let $\ph_{s}:  \SL_{2}\to G$ be the unique homomorphism that restricts to $\a^{\vee}:\Gm\to T$ on the diagonal torus $\Gm$ of $\SL_{2}$. Let $\t(s)=\ph_{s}(\mat{0}{1}{-1}{0})$. For $w\in W$ with reduced expression $w=s_{1}\cdots s_{m}$, Tits proved that $\t(w):=\t(s_{1})\cdots\t(s_{m})$ is independent of the reduced expression. The map $\t: w\mapsto \t(w)$ defines a section to $\pi$. The map $\D: W^{2}\ni (x,y)\mapsto \t(x)\t(y)\t(xy)^{-1}\in T[2]$ defines a collapsing $2$-cocycle on $W$ valued in the two-torsion points $T[2]$ of $T$.
\end{exam}

The goal of this section is to describe the universal object among pairs $(A,\z)$ where $A$ is  a $\ZZ[W]$-module and $\z$ is a collapsing $n$-cocycle of $W$ valued in $A$.

\subsection{Walls, chambers, etc.}
We recall the geometric realization of a Coxeter group $(W,S)$ following \cite[Ch. V, \S4]{B}. 

Let $E=\RR^{S}$ equipped with basis $\{e_{s}\}_{s\in S}$ and let $E^{*}$ be its dual. There is a faithful representation of $W$ on $E^{*}$ such that $s\in S$ acts by a reflection. Let $H_{s}\subset E^{*}$ be the hyperplane fixed by $s$ (equivalently $H_{s}$ is the kernel of $e_{s}$). A {\em wall} is a hyperplane in $E^{*}$ of the form $w\cdot H_{s}$ for some  $w\in W, s\in S$. Let $\cH$ be the set of walls; it is in bijection with reflections in $W$ (the $W$-conjugates of $S$). 

%

For $H\in \cH$,  $E^{*}-H$ has two connected components called {\em reflecting half-spaces}. Let $\cD$ be the set of reflecting half-spaces. Let $C_{0}\subset E^{*}$ be open cone defined by $C=\{x\in E^{*}|\j{x,e_{s}}>0, \forall s\in S\}$.  The $W$-translates of $C_{0}$ are called {\em chambers}. Let $\cC$ be the set of chambers. 

There is an involution $\s: \cD\to \cD$ sending $D$ to  $E^{*}-\ov{D}$.  For each $H\in \cH$, let $\{D_{H}^{+},D^{-}_{H}\}$ be the two connected components of $E^{*}-H$, where $D^{+}_{H}$ contains $C_{0}$. For $H=H_{s}$, we denote $D^{\pm}_{H_{s}}$ by $D_{s}^{\pm}$.

\subsection{} For each $D\in \cD$ and $C\in \cC$, let
\begin{equation*}
v_{D}(C)=\begin{cases} 1, & \textup{ if }C\subset D; \\ 0, & \textup{ if }C\subset \s(D).\end{cases}
\end{equation*}

The free abelian group $\ZZ[\cD]$ carries an action of the involution $\s$.  Let $\ZZ[\cD]^{+}$ (resp. $\ZZ[\cD]^{-}$) be the subgroup of $\ZZ[\cD]$ consisting of those elements such that $\s(a)=a$ (resp. $\s(a)=-a$). Then $\ZZ[\cD]^{\pm}$ is the free abelian group with basis $[D^{+}_{H}]\pm[D^{-}_{H}]$ for $H\in \cH$.  Both $\ZZ[\cD]^{\pm}$ are $\ZZ[W]$-modules.

%

\begin{defn} Let $n\in\ZZ_{\ge1}$. Let $Z^{W}_{n}$ be the function
\begin{eqnarray}
\notag Z^{W}_{n}: W^{n}&\to& \ZZ[\cD]\\
\label{Zn} \un x=(x_{1},\cdots ,x_{n})&\mapsto & \sum_{D\in \cD}\left(\prod_{i=0}^{n-1}(v_{D}(C_{i})-v_{D}(C_{i+1}))\right)[D].
\end{eqnarray}
Here  $C_{i}=x_{1}\cdots x_{i}(C)$ for $1\le i\le n$.
\end{defn}
From the definition, we see that $[D]$ appears in $Z^{W}_{n}(\un x)$ if and only if  the sequence of chambers $(C_{0},\cdots, C_{n})$ is {\em $H$-alternating}  in the sense that  $C_{i}$ and $C_{i+1}$ lie on different sides of $H$, for all $0\le i\le n-1$.  The following alternative formula for $Z_{n}^{W}$ is immediate from the definition.

\begin{lemma} For $(x_{1},\cdots ,x_{n})\in W^{n}$, let $C_{i}=x_{1}\cdots x_{i}C_{0}$. Then 
\begin{equation}\label{Zn'}
Z_{n}^{W}(x_{1},\cdots ,x_{n})=\sum_{\mbox{$H\in\cH$; $(C_{0},\cdots, C_{n})$ is $H$-alternating}}(-1)^{[n/2]}[D^{+}_{H}]+(-1)^{[(n+1)/2]}[D^{-}_{H}].
\end{equation}
\end{lemma}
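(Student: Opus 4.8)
The plan is to regroup the sum in \eqref{Zn} according to walls. The set $\cD$ of reflecting half-spaces is the disjoint union, over $H\in\cH$, of the two-element sets $\{D^{+}_{H},D^{-}_{H}\}$, so it suffices to compute, for each $H$, the two coefficients $\prod_{i=0}^{n-1}(v_{D}(C_{i})-v_{D}(C_{i+1}))$ attached to $D=D^{+}_{H}$ and to $D=D^{-}_{H}$, and then to read off which walls contribute.

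First I would record the observation already made right after \eqref{Zn}: a single factor $v_{D}(C_{i})-v_{D}(C_{i+1})$ is nonzero exactly when $C_{i}$ and $C_{i+1}$ lie on opposite sides of the wall $H$ bounding $D$. Hence both coefficients attached to $H$ vanish unless $(C_{0},\dots,C_{n})$ is $H$-alternating, which is precisely the index set appearing in the sum \eqref{Zn'}.

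Next, fix a wall $H$ such that $(C_{0},\dots,C_{n})$ is $H$-alternating. Since $C_{0}$ lies in $D^{+}_{H}$ by the very definition of $D^{+}_{H}$, we have $v_{D^{+}_{H}}(C_{0})=1$, and then the $H$-alternating property forces $v_{D^{+}_{H}}(C_{i})=1$ for $i$ even and $0$ for $i$ odd. Thus the $i$-th factor equals $(-1)^{i}$, and the coefficient of $[D^{+}_{H}]$ is $(-1)^{0+1+\cdots+(n-1)}=(-1)^{n(n-1)/2}$. For $D^{-}_{H}=\s(D^{+}_{H})$ one has $v_{D^{-}_{H}}=1-v_{D^{+}_{H}}$ on chambers, so the $i$-th factor is $(-1)^{i+1}$ and the coefficient of $[D^{-}_{H}]$ is $(-1)^{1+2+\cdots+n}=(-1)^{n(n+1)/2}$. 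Summing over all $H$-alternating walls then gives \eqref{Zn'}, once one verifies the elementary congruences $n(n-1)/2\equiv[n/2]$ and $n(n+1)/2\equiv[(n+1)/2]$ modulo $2$, both of which hold because each side is periodic in $n$ with period $4$ and one checks $n\in\{0,1,2,3\}$ by hand.

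As the paper already remarks, the statement is immediate from the definition, so there is no genuine obstacle here; the only points requiring care are the sign bookkeeping in the telescoping product and the convention that $D^{+}_{H}$ is the half-space containing the fundamental chamber $C_{0}$, which is what makes the two coefficients $(-1)^{[n/2]}$ and $(-1)^{[(n+1)/2]}$ independent of $H$.
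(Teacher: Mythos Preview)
Your proof is correct and is exactly the computation the paper has in mind when it declares the lemma ``immediate from the definition'' (the paper gives no further argument). The only content is the sign bookkeeping you carry out, and your identities $n(n-1)/2\equiv\lfloor n/2\rfloor$ and $n(n+1)/2\equiv\lfloor(n+1)/2\rfloor\pmod 2$ are correct.
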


\begin{prop}\label{p:closed} The function $Z^{W}_{n}$ is an $n$-cocycle valued in $\ZZ[\cD]^{\e(n)}$, i.e., $Z^{W}_{n}\in Z^{n}(W, \ZZ[\cD]^{\e(n)})$.
\end{prop}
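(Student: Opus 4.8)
The plan is to check the two defining properties of an $n$-cocycle valued in $\ZZ[\cD]^{\e(n)}$ in turn: first that each value $Z^{W}_{n}(\un x)$ really lies in the $\s$-eigenspace $\ZZ[\cD]^{\e(n)}\subset\ZZ[\cD]$, and then that $Z^{W}_{n}$ is annihilated by the standard (inhomogeneous bar) differential $d$ computing $\upH^{*}(W,-)$. (That $Z^{W}_{n}(\un x)$ is a well-defined element of $\ZZ[\cD]$ rather than of some completion is clear from \eqref{Zn'}: the walls $H$ that contribute are among those separating $C_{0}$ from $C_{1}=x_{1}C_{0}$, which are finite in number.) The first property is a short local computation; the second is the real content, but it too comes down to a formal identity.

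For the eigenspace claim I would fix a wall $H$ and compare coefficients of $[D^{+}_{H}]$ and $[D^{-}_{H}]$ in $Z^{W}_{n}(\un x)$. Since $\s$ interchanges $D^{+}_{H}$ and $D^{-}_{H}$, every chamber $C'$ satisfies $v_{D^{-}_{H}}(C')=1-v_{D^{+}_{H}}(C')$, and substituting this into $\prod_{i=0}^{n-1}\bigl(v_{D}(C_{i})-v_{D}(C_{i+1})\bigr)$ reverses the sign of each of the $n$ factors. Hence the coefficient of $[D^{-}_{H}]$ equals $(-1)^{n}$ times that of $[D^{+}_{H}]$, so $Z^{W}_{n}(\un x)$ is a $\ZZ$-linear combination of the elements $[D^{+}_{H}]+(-1)^{n}[D^{-}_{H}]$; these span $\ZZ[\cD]^{+}$ if $n$ is even and $\ZZ[\cD]^{-}$ if $n$ is odd, i.e.\ $\ZZ[\cD]^{\e(n)}$.

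For the cocycle identity, since $\ZZ[\cD]^{\e(n)}$ is a $\ZZ[W]$-submodule of $\ZZ[\cD]$ it is enough to work in $\ZZ[\cD]$ and check it coefficient-by-coefficient. I would first pass to homogeneous cochains. Let $\wt Z_{n}:W^{n+1}\to\ZZ[\cD]$ be given by the formula \eqref{Zn} with $C_{i}$ replaced by $g_{i}C_{0}$; then $Z^{W}_{n}(x_{1},\dots,x_{n})=\wt Z_{n}(1,x_{1},\dots,x_{1}\cdots x_{n})$, and the relation $v_{D}(gC')=v_{g^{-1}D}(C')$ (together with $g\s=\s g$) shows $\wt Z_{n}$ is $W$-equivariant. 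Unwinding $d$ in terms of the partial products $g_{i}=x_{1}\cdots x_{i}$ then identifies $(dZ^{W}_{n})(x_{1},\dots,x_{n+1})$ with the alternating deletion sum $\sum_{j=0}^{n+1}(-1)^{j}\wt Z_{n}(g_{0},\dots,\wh{g_{j}},\dots,g_{n+1})$, so it suffices to show this vanishes for every $(g_{0},\dots,g_{n+1})\in W^{n+1}$. Fixing $D\in\cD$ and writing $a_{j}=v_{D}(g_{j}C_{0})\in\{0,1\}$, the coefficient of $[D]$ is $\sum_{j}(-1)^{j}Q_{j}$, where $Q_{j}$ is the product of the consecutive differences of the sequence $(a_{0},\dots,\wh{a_{j}},\dots,a_{n+1})$. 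Setting $M_{k}=\prod_{i\ne k}(a_{i}-a_{i+1})$ (the product of all the differences $a_{i}-a_{i+1}$ except the $k$-th), the identity $a_{j-1}-a_{j+1}=(a_{j-1}-a_{j})+(a_{j}-a_{j+1})$ gives $Q_{j}=M_{j-1}+M_{j}$ for $1\le j\le n$, while $Q_{0}=M_{0}$ and $Q_{n+1}=M_{n}$; the alternating sum then telescopes to $0$.

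The main obstacle here is clerical rather than conceptual: one must keep the indexing and signs straight when identifying $dZ^{W}_{n}$ with the homogeneous deletion sum — an off-by-one would quietly replace $d$ by an isomorphic but different differential — and be careful with the two boundary terms $Q_{0},Q_{n+1}$ in the telescoping, as well as with the parities of $[n/2]$ and $[(n+1)/2]$ appearing in \eqref{Zn'}.
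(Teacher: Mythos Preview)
Your proof is correct and follows essentially the same route as the paper's: the eigenspace claim is the same sign observation (the paper phrases it via \eqref{Zn'} rather than directly via $v_{\s D}=1-v_{D}$), and the cocycle identity is the identical telescoping argument --- the paper's $d_{j}=h_{j}-h_{j+1}$ and its remark that each monomial $d_{0}\cdots\wh{d_{j}}\cdots d_{n}$ appears twice with opposite signs is precisely your $Q_{j}=M_{j-1}+M_{j}$ computation, just carried out in inhomogeneous rather than homogeneous coordinates. (One small typo: the tuple $(g_{0},\dots,g_{n+1})$ lives in $W^{n+2}$, not $W^{n+1}$.)
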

\begin{proof} By \eqref{Zn'}, $Z_{n}(x_{1},\cdots, x_{n})$ is a linear combination of $[D_{H}^{+}]+[D_{H}^{-}]$ if $n$ is even (so that $[n/2]=[(n+1)/2]$) and otherwise a linear combination of $[D_{H}^{+}]-[D_{H}^{-}]$.  This shows that $Z^{W}_{n}$ takes values in $\ZZ[\cD]^{\e(n)}$.

Now we show $Z^{W}_{n}$ is a cocycle. Write $Z^{W}_{n}$ simply as $Z_{n}$. Let $x_{1},\cdots, x_{n}, x_{n+1}\in W$. Then
\begin{equation*}
\d Z_{n}(x_{1},\cdots, x_{n+1})=x_{1}Z_{n}(x_{2},\cdots,x_{n+1})+\sum_{j=1}^{n}(-1)^{j}Z_{n}(x_{1},\cdots ,x_{j}x_{j+1},\cdots ,x_{n+1})+(-1)^{n+1}Z_{n}(x_{1},\cdots ,x_{n}).
\end{equation*}
Fix $D\in \cD$, let $h_{j}=v_{D}(C_{j})$. For $1\le j\le n$, the  coefficient of $[D]$ in $Z_{n}(x_{1},\cdots ,x_{j}x_{j+1},\cdots ,x_{n+1})$ is $(h_{0}-h_{1})\cdots (h_{j-1}-h_{j+1})\cdots (h_{n}-h_{n+1})$. The coefficient of $[D]$ in the last term $Z_{n}(x_{1},\cdots ,x_{n})$ is by definition $(h_{0}-h_{1})\cdots (h_{n-1}-h_{n})$. The coefficient of $[D]$ in $x_{1}Z_{n}(x_{2},\cdots,x_{n+1})$ is the same as the coefficient of $[x^{-1}_{1}D]=:[D']$ in $Z_{n}(x_{2},\cdots,x_{n+1})$, which is the product of $v_{D'}(C_{0})-v_{D'}(x_{2}C_{0}), v_{D'}(x_{2}C_{0})-v_{D'}(x_{2}x_{3}C_{0}),\cdots, v_{D'}(x_{2}\cdots x_{n}C_{0})-v_{D'}(x_{2}\cdots x_{n+1}C_{0})$. Since $v_{D'}(x_{2}\cdots x_{i}C_{0})=v_{D}(x_{1}\cdots x_{i}C_{0})=v_{D}(C_{i})=h_{i}$, we see that the coefficient of $[D]$ in $x_{1}Z_{n}(x_{2},\cdots,x_{n+1})$ is $(h_{1}-h_{2})\cdots (h_{n}-h_{n+1})$.  Therefore, the coefficient of $[D]$ in $\d Z_{n}(x_{1},\cdots, x_{n+1})$ is
\begin{equation}\label{hh}
(h_{1}-h_{2})\cdots (h_{n}-h_{n+1})+\sum_{j=1}^{n}(-1)^{j}(h_{0}-h_{1})\cdots (h_{j-1}-h_{j+1})\cdots (h_{n}-h_{n+1}) +(-1)^{n+1} (h_{0}-h_{1})\cdots (h_{n-1}-h_{n}).
\end{equation}
Write $d_{j}=h_{j}-h_{j+1}$, then  $h_{j-1}-h_{j+1}=d_{j-1}+d_{j}$. Expand the RHS into the $d_{j}$'s, we get each monomial $d_{0}d_{1}\cdots \wh{d}_{j}\cdots d_{n}$ appearing twice with opposite signs, for all  $0\le j\le n$. Therefore \eqref{hh} is zero, hence $\d Z_{n}=0$. 
\end{proof}


\begin{lemma}\label{l:two prop}
The cocycle $Z^{W}_{n}$ satisfies the following two properties:
\begin{enumerate}
\item If $n\ge2$ then $Z^{W}_{n}$ is collapsing.
\item For any $s\in S$, $Z^{W}_{n}(s,s,\cdots, s)=(-1)^{[n/2]}[D^{+}_{s}]+(-1)^{[(n+1)/2]}[D^{-}_{s}]$.
\end{enumerate}
\end{lemma}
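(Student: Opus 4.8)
The plan is to reduce everything to counting walls that separate chambers. Recall from \cite[Ch. V, \S4]{B} that $|w|$ equals the number of walls separating $C_{0}$ from $wC_{0}$; I will write $d(C,C')$ for the number of walls separating chambers $C$ and $C'$, so that $|w|=d(C_{0},wC_{0})$ and $d$ is invariant under the $W$-action. I will also use the elementary observation that, for any three chambers $A,B,C$, a wall separates $A$ from $C$ precisely when it separates exactly one of the pairs $(A,B)$ and $(B,C)$; consequently $d(A,C)=d(A,B)+d(B,C)$ if and only if no wall separates both $(A,B)$ and $(B,C)$.

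For (1), assume $|x_{i}x_{i+1}|=|x_{i}|+|x_{i+1}|$ for some $1\le i\le n-1$ and set $C_{j}=x_{1}\cdots x_{j}C_{0}$, with $C_{0}$ the fundamental chamber. Applying the isometry $x_{1}\cdots x_{i-1}$ gives $d(C_{i-1},C_{i})=|x_{i}|$, $d(C_{i},C_{i+1})=|x_{i+1}|$ and $d(C_{i-1},C_{i+1})=|x_{i}x_{i+1}|$, so by the observation above the hypothesis says that no wall separates both $(C_{i-1},C_{i})$ and $(C_{i},C_{i+1})$. On the other hand, as noted just after the definition of $Z^{W}_{n}$, a half-space $D$ with wall $H$ can contribute to $Z^{W}_{n}(x_{1},\dots,x_{n})$ only if $(C_{0},\dots,C_{n})$ is $H$-alternating, and this in particular places $C_{i-1},C_{i}$ on opposite sides of $H$ as well as $C_{i},C_{i+1}$ on opposite sides of $H$, so that $H$ separates both pairs --- impossible. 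Hence $Z^{W}_{n}(x_{1},\dots,x_{n})=0$, i.e.\ $Z^{W}_{n}$ is collapsing.

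For (2), take $x_{1}=\cdots=x_{n}=s$, so $C_{i}=s^{i}C_{0}$ equals $C_{0}$ when $i$ is even and $sC_{0}$ when $i$ is odd; every consecutive pair $(C_{i},C_{i+1})$ is then $\{C_{0},sC_{0}\}$. A wall $H$ makes $(C_{0},\dots,C_{n})$ $H$-alternating iff $H$ separates $C_{0}$ from $sC_{0}$, and since $d(C_{0},sC_{0})=|s|=1$ the unique such wall is $H_{s}$. Substituting this into formula \eqref{Zn'} at once yields $Z^{W}_{n}(s,\dots,s)=(-1)^{[n/2]}[D^{+}_{s}]+(-1)^{[(n+1)/2]}[D^{-}_{s}]$. (Alternatively one reads the signs straight off \eqref{Zn}: for $D=D^{+}_{s}$ or $D=D^{-}_{s}$ the factors $v_{D}(C_{i})-v_{D}(C_{i+1})$ are alternately $\pm1$, and counting odd versus even indices in $\{0,\dots,n-1\}$ reproduces the exponents $[n/2]$ and $[(n+1)/2]$.)

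I do not expect a genuine obstacle: once the dictionary between length and separating walls is in place the argument is purely combinatorial. The only points that need care are the even/odd index count that pins down the two signs in (2), and making sure the $W$-invariance of $d$ is applied only to the chambers $C_{i-1},C_{i},C_{i+1}$ --- which all lie in a single $W$-orbit, hence inside the Tits cone where these notions are defined.
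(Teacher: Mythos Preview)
Your proof is correct and follows essentially the same approach as the paper: both arguments reduce (1) to the observation that when $|x_{i}x_{i+1}|=|x_{i}|+|x_{i+1}|$ no wall can separate both $(C_{i-1},C_{i})$ and $(C_{i},C_{i+1})$, and both read (2) directly off \eqref{Zn'} using that $H_{s}$ is the unique wall between $C_{0}$ and $sC_{0}$. Your version simply makes the length--wall dictionary and the additivity of the chamber distance $d$ explicit, whereas the paper states the three possible configurations of $C_{i-1},C_{i},C_{i+1}$ relative to $H$ without naming $d$.
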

\begin{proof}
(1) If $|x_{i}x_{i+1}|=|x_{i}|+|x_{i+1}|$, then for any $H\in \cH$, the three chambers $C_{i-1}=x_{1}\cdots x_{i-1}C_{0}$, $C_{i}$ and $C_{i+1}$ either lie on the same side of $H$, or $H$ separates $C_{i-1}$ with $C_{i}$ and $C_{i+1}$, or $H$ separates $C_{i-1}, C_{i}$ with $C_{i+1}$. In any case $H$ is not $(C_{i-1},C_{i},C_{i+1})$-alternating. Therefore $Z^{W}_{n}(x_{1},\cdots ,x_{n})=0$ in this case. 

(2)For $s\in S$, $H_{s}$ is the only wall separating $C_{0}$ and $sC_{0}$, so only $[D^{\pm}_{s}]$ appear in $Z^{W}_{n}(s,s,\cdots, s)$. The coefficients follow from \eqref{Zn'}.
\end{proof}

The main result of this section is that $Z_{n}^{W}$ is the universal collapsing $n$-cocycle on $W$.

\begin{theorem}\label{th:univ} Let $n\ge2$ be an integer. Let $A$ be a $\ZZ[W]$-module and $\z\in Z^{n}(W,A)$ be a collapsing cocycle. Then there exists a unique $W$-equivariant homomorphism $a:\ZZ[\cD]^{\e(n)}\to A$ such that $\z=a(Z_{n}^{W})$. 
\end{theorem}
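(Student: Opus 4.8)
The plan is to produce the map $a$ explicitly on generators and then verify it is well-defined, $W$-equivariant, and unique. First I would establish a presentation of $\ZZ[\cD]^{\e(n)}$ as a $\ZZ[W]$-module adapted to the problem: since $\ZZ[\cD]^{\e(n)}$ is free abelian on the elements $\beta_H := [D^+_H]+\e(n)[D^-_H]$ for $H\in\cH$, and $W$ acts transitively on $\cH$, this module is generated by the single element $\beta_s := [D^+_s] + \e(n)[D^-_s]$ for any fixed $s\in S$ (here using $\e(n)$ as a sign $\pm1$ by the convention in \S\ref{ss:not}). Thus a $W$-equivariant homomorphism out of $\ZZ[\cD]^{\e(n)}$ is determined by where it sends $\beta_s$, so \emph{uniqueness} is immediate once existence is known: by Lemma \ref{l:two prop}(2), $Z^W_n(s,\dots,s) = (-1)^{[n/2]}\beta_s$ (since $(-1)^{[(n+1)/2]} = (-1)^{[n/2]}\cdot\e(n)$ in the relevant sense), so $a$ is forced to satisfy $a(\beta_s) = (-1)^{[n/2]}\z(s,\dots,s)$, and there is at most one $W$-equivariant extension — provided it exists.

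For existence, the real content is to identify the relations among the $W$-translates $w\cdot\beta_s$ inside $\ZZ[\cD]^{\e(n)}$. I would argue that $\ZZ[\cD]^{\e(n)}$ is the quotient of $\ZZ[W]\otimes_{\ZZ[\Stab_W(H_s)]}\ZZ_{\e}$ (a suitable induced module, where $\Stab_W(H_s)$ acts on $\ZZ$ through the character recording whether an element preserves or swaps $D^\pm_s$) — equivalently, the free abelian group on $\cH$ with the twisted $W$-action. The candidate map is then: send $w\cdot\beta_s$ to $(-1)^{[n/2]}\, w\cdot \z(s,s,\dots,s)$, extended additively. To check this is well-defined I must verify that whenever $w\cdot\beta_s = w'\cdot\beta_{s'}$ in $\ZZ[\cD]^{\e(n)}$ — i.e. $w H_s = w' H_{s'}$ with the half-spaces matching up to the sign $\e(n)$ — the corresponding values agree, i.e. $w\cdot\z(s,\dots,s) = \pm\, w'\cdot\z(s',\dots,s')$. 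This is exactly where the collapsing hypothesis on $\z$ must be exploited: it will force $\z$ to be determined, through the cocycle identity $\d\z=0$, by its values on the "diagonal" tuples, in the same way that $Z^W_n$ is. Concretely, I would use the cocycle identity to reduce $\z(x_1,\dots,x_n)$ to a sum of terms $\z(\dots,s,s,\dots)$ supported on walls, mirroring how \eqref{Zn'} decomposes $Z^W_n$ over $H$-alternating configurations; since $Z^W_n$ satisfies the same reduction and is universal by construction, the identity $\z = a(Z^W_n)$ for the map $a$ just defined follows by matching coefficients wall by wall.

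The cleanest route is probably the following. Define $a$ on the basis $\{\beta_H\}_{H\in\cH}$ of $\ZZ[\cD]^{\e(n)}$ by choosing, for each wall $H$, an element $w_H\in W$ and $s_H\in S$ with $w_H H_{s_H} = H$ and $w_H D^+_{s_H} = D^+_H$, and setting $a(\beta_H) = (-1)^{[n/2]} w_H\cdot\z(s_H,\dots,s_H)$; then prove this is independent of the choice $(w_H,s_H)$ and $W$-equivariant using that any two such choices differ by an element of $\Stab_W(H)$, combined with collapsing. Having $a$, the equation $\z = a(Z^W_n)$ is verified by evaluating both sides at an arbitrary $(x_1,\dots,x_n)\in W^n$: the right-hand side expands, via \eqref{Zn'}, into $\sum_H (\text{coeff})\cdot w_H\z(s_H,\dots,s_H)$ over $H$-alternating walls, and the left-hand side must be shown to equal this same sum — which is a consequence of $\d\z=0$ together with collapsing, applied inductively on $n$ or via a direct "telescoping" expansion analogous to the proof of Proposition \ref{p:closed}.

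The main obstacle I expect is precisely this last step — showing that a collapsing cocycle $\z$ is reconstructed from its diagonal values $\z(s,\dots,s)$ in the wall-by-wall manner dictated by $Z^W_n$. Proving $Z^W_n$ is a cocycle (Prop.\ \ref{p:closed}) is a formal telescoping identity; the converse — that collapsing plus cocycle forces $\z$ to factor through $Z^W_n$ — requires genuinely using the combinatorics of galleries in the Coxeter complex, e.g. that any $(x_1,\dots,x_n)$ can be connected, through moves that either insert/delete a length-additive pair (killed by collapsing) or are controlled by the cocycle relation, to a standard form built from diagonal tuples. Getting this induction/reduction argument airtight, and checking the signs $(-1)^{[n/2]}$ versus $(-1)^{[(n+1)/2]}$ track correctly through it for both parities of $n$, is the delicate part.
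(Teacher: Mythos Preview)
Your plan is essentially the paper's own proof. The paper defines $a$ exactly as you do (on the basis $\{\beta_H\}$, via a choice of $(w,s)$ with $wD^+_s=D^+_H$), packages the well-definedness and $W$-equivariance check you describe as a separate lemma (Lemma~\ref{l:as}: if $wsw^{-1}=s'$ then $w\cdot\zeta(s,\dots,s)=\pm\zeta(s',\dots,s')$ with the sign governed by whether $w$ preserves or swaps the half-spaces, proved by repeatedly applying $\delta\zeta=0$ to tuples of the form $(s',\dots,s',w,s,\dots,s)$), and handles your ``main obstacle'' not by a direct wall-by-wall expansion of $\zeta$ but by the equivalent statement that the collapsing cocycle $\xi:=\zeta-a\circ Z^W_n$, which has $\xi(s,\dots,s)=0$ for all $s$, must vanish identically (Lemma~\ref{l:xi van}); the induction used there is a double induction on the total length $L(\underline{x})=\sum|x_i|$ and, within each length, downward on $\mu(\underline{x})=\min\{i:|x_i|\ge 2\}$, peeling a simple reflection off the leftmost long entry via the cocycle relation --- exactly the ``connect through moves'' idea you sketched, made precise.
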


To prove the theorem, we need two lemmas.
\begin{lemma}\label{l:as}
Suppose $n\ge2$ and $\z\in Z^{n}(W,A)$ is collapsing.  For $s\in S$ let $a_{s}:=\z(s,s,\cdots, s)\in A$. Let $s,s'\in S$ and let $w\in W$ be such that $wsw^{-1}=s'$, then
\begin{equation*}
wa_{s}=\begin{cases}a_{s'} & \mbox{ if $wD_{s}^{+}=D_{s'}^{+}$;}\\ (-1)^{n}a_{s'} & \mbox{ if $wD_{s}^{+}=D_{s'}^{-}$.}\end{cases}
\end{equation*}
\end{lemma}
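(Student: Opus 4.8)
The plan is to relate the value $\z(s,s,\dots,s)$ to $\z(s',s',\dots,s')$ by moving $w$ through the cocycle identity. The key observation is that $wsw^{-1}=s'$ means $w s = s' w$, so the chamber sequence produced by inserting $w$ in front of a string of $s$'s can be manipulated using the collapsing hypothesis. First I would reduce to the case where $w$ has a reduced expression compatible with the separation data, i.e.\ I would try to choose $w$ so that $|ws| = |w|+1$ (equivalently $wD_s^+ = D_{s'}^+$) or $|ws| = |w|-1$; in general one of $w$ or $s'w = ws$ will be the "shorter" one and this dichotomy is exactly what produces the two cases $wD_s^+ = D_{s'}^{\pm}$. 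Recall from the wall picture that $wD_s^+ = D_{s'}^+$ iff $w^{-1}$ does not send $C_0$ across $H_{s'}$, which (since $s'$ is the reflection in $H_{s'}$) is equivalent to $|s'w|=|w|+1$, i.e.\ $|ws|=|w|+1$.

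Assume first $|ws| = |w| + 1$. Then I would apply the cocycle condition $\d\z = 0$ to well-chosen tuples. Consider the $(n+1)$-tuple $(w, s, s, \dots, s)$ with $n$ copies of $s$. Expanding $\d\z(w,s,\dots,s) = 0$: the leading term is $w\cdot\z(s,\dots,s) = wa_s$; the term $(-1)^{n+1}\z(w,s,\dots,s)$ (dropping the last $s$) has $n-1 \ge 1$ copies of $s$ at the end, and since $|ss| \ne |s|+|s|=2$ is false — wait, $|ss| = 0 = |s|+|s|$ fails, so $ss$ is \emph{not} length-additive; rather I should exploit that $(w,s)$ \emph{is} length-additive: in the interior terms $\z(\dots, x_jx_{j+1}, \dots)$, the pair $ss$ collapses to $e$ and the resulting tuple $(w, s, \dots, \widehat{ss}, \dots, s)$ has a consecutive pair $s,s$ unless $n$ is small, but more usefully any tuple still containing a consecutive $(s,s)$ does \emph{not} collapse. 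So the right move is: use that $(w,s,\dots,s)$ with $w$ replaced appropriately has $ws = s'w$, and slide. Concretely I would instead compare $\z(s', \dots, s', w)$ and $\z(w, s, \dots, s)$ via the identity $s'^k w = w s^k$ for all $k$: apply $\d\z$ to $(s',\dots,s',w,s)$ and to $(s',\dots,s',w)$, using collapsing to kill all terms except the two "pure" ones, obtaining $\z(s',\dots,s') \cdot(\text{sign}) = w\z(s,\dots,s)$ up to the group action on the coefficient, which yields the stated formula.

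The second case $wD_s^+ = D_{s'}^-$ corresponds to $|ws| = |w|-1$, equivalently $w = w's'$ with $|w'| = |w|-1$ and $w's'(s')w'^{-1}$... more cleanly, write $w = s' w''$ with $|w''|=|w|-1$ and $w''sw''^{-1} = s'' := s'w's'$ type relation — or simply observe $w s = s' w$ gives $(s'w) s = s'(s'w) s = \dots$; the honest reduction is: if $wD_s^+ = D_{s'}^-$ then $s'w$ satisfies $(s'w)D_s^+ = s'D_{s'}^- = D_{s'}^+$, so by the first case $(s'w)a_s = a_{s'}$. On the other hand $\z$ is a cocycle valued in $A$, and I compute $w a_s$ versus $(s'w)a_s = s'(wa_s)$ using that $a_{s'} = \z(s',\dots,s')$ together with $\d\z(s', s', \dots, s') = 0$ on $n+1$ copies of $s'$: since no consecutive pair $(s',s')$ is length-additive, nothing collapses automatically, but the telescoping cocycle identity on a pure string $(s',\dots,s')$ of length $n+1$ gives $s' a_{s'} + (-1)^{n+1} a_{s'} = (\text{interior terms})$, and each interior term $\z(s',\dots, s's', \dots, s')=\z(s',\dots,e,\dots,s')$; here I would invoke that $\z$ extended with an identity entry equals $0$ (a standard normalization consequence of the cocycle identity, or derivable since $(s', e)$ \emph{is} length-additive as $|s'\cdot e| = |s'| = |s'|+|e|$, so collapsing applies). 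Hence $s' a_{s'} = (-1)^n a_{s'}$, and combining $s'(wa_s) = a_{s'}$ with $s' a_{s'} = (-1)^n a_{s'}$ gives $wa_s = s'a_{s'}= (-1)^n a_{s'}$... I'd need to be careful with which direction, but this is the mechanism.

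The main obstacle I anticipate is bookkeeping the signs and the $W$-action simultaneously: the cocycle identity mixes the differential sign $(-1)^j$ with the module action of $w$ on $A$, and getting a monomial like $d_0 d_1 \cdots \widehat{d_j}\cdots d_n$-style cancellation to isolate exactly one surviving term requires choosing the auxiliary tuple so that \emph{every} interior product either collapses (has a length-additive consecutive pair, or an identity entry) or telescopes against a boundary term. The cleanest route is probably to prove the normalization lemma "$\z(y_1,\dots,y_n)=0$ whenever some $y_i = e$" first (immediate from collapsing, since $(e, y_{i+1})$ is length-additive), then run $\d\z = 0$ on the two tuples $(w, s, s^{[n-1]})$ wait—rather on $(s'^{[k]}, w, s^{[n-k]})$ for the single value $k$ that makes the split work, reading off $w a_s = \pm a_{s'}$ directly; verifying that the sign is $+1$ in the first case and $(-1)^n$ in the second is then a short parity count of how many interior terms survive, which I'd check by hand for $n=2,3$ and then by the general telescoping argument already used in Proposition~\ref{p:closed}.
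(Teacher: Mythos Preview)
Your overall strategy matches the paper's: split on whether $|ws|=|w|+1$ or $|ws|=|w|-1$, handle the first case by applying the cocycle identity to tuples of the form $(s'^{[k]},w,s^{[n-k]})$ and using collapsing, and reduce the second case to the first via $s'a_{s'}=(-1)^{n}a_{s'}$ (which you derive correctly from $\d\z(s',\dots,s')=0$ together with the observation that entries equal to $e$ force collapse). The second case is fine, and the paper does exactly this (it writes $w=w's$ with $w'=ws$ rather than passing to $s'w$, but the two are equivalent).

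The first case, however, has a genuine gap. There is no ``single value $k$ that makes the split work'': one application of $\d\z$ to $(s'^{[k]},w,s^{[n-k]})$ for any fixed $k$ only relates two \emph{mixed} values of $\z$ to each other, not $wa_{s}$ directly to $a_{s'}$. What the paper does is iterate over \emph{all} $k=0,1,\dots,n$. Concretely, since $|ws|=|s'w|=|w|+1$, both pairs $(w,s)$ and $(s',w)$ are length-additive, so in the expansion of $\d\z(s'^{[k]},w,s^{[n-k]})=0$ every term collapses except exactly two. For $1\le k\le n-1$ the two survivors are the interior products at positions $k$ and $k+1$, yielding
\[
\z(s'^{[k-1]},s'w,s^{[n-k]})=\z(s'^{[k]},ws,s^{[n-k-1]})=\z(s'^{[k]},s'w,s^{[n-k-1]}),
\]
which slides $w$ one slot to the right. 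At the endpoint $k=0$ the two survivors are the first boundary term $w\z(s,\dots,s)=wa_{s}$ and $-\z(ws,s^{[n-1]})$; at $k=n$ they are $(-1)^{n}\z(s'^{[n-1]},s'w)$ and the last boundary term $(-1)^{n+1}\z(s'^{[n]})=(-1)^{n+1}a_{s'}$. Chaining these $n+1$ identities gives
\[
wa_{s}=\z(s'w,s^{[n-1]})=\z(s',s'w,s^{[n-2]})=\cdots=\z(s'^{[n-1]},s'w)=a_{s'}.
\]
Your aborted attempt with $\d\z(w,s,\dots,s)$ was actually the correct starting point ($k=0$): once you note that the \emph{last} boundary term $(-1)^{n+1}\z(w,s^{[n-1]})$ also collapses (because it still contains the length-additive pair $(w,s)$, using $n\ge2$), only $wa_{s}$ and $-\z(ws,s^{[n-1]})$ survive. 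The reference to the telescoping in Proposition~\ref{p:closed} is a red herring; that computation is specific to the explicit formula for $Z^{W}_{n}$ and does not help here.
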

\begin{proof} We have $ws=s'w$. Consider the case $wD^{+}_{s}=D_{s'}^{+}$, then $wC_{0}$ and $C_{0}$ lie on the same side of $H_{s'}$, which implies $|ws|=|s'w|>|w|$.

The cocycle condition $(\d \z)(w,s,\cdots ,s)=0$ together with the collapsing of $\z$ (applied to the first two elements $(w,s)$) imply that in the expansion of $(\d \z)(w,s,\cdots ,s)$, the only nonzero terms are $w\z(s,\cdots ,s)=wa_{s}$ and $-\z(ws,s,\cdots, s)$ (here we use $n\ge2$). Therefore $wa_{s}=\z(ws,s,\cdots, s)=\z(s'w,s,\cdots, s)$. Then we apply the cocycle condition $(\d\z)(s',w,s,\cdots ,s)=0$ to get $\z(s'w,s,\cdots, s)=\z(s',ws,s,\cdots, s)=\z(s',s'w,s,\cdots,s)$. Then we apply $(\d\z)(s',s',w,s,\cdots ,s)=0$, etc. In this way we get
\begin{equation*}
wa_{s}=\z(s'w,s,\cdots,s)=\z(s',s'w,s,\cdots ,s)=\cdots=\z(s',\cdots ,s'w).
\end{equation*}
Finally we apply the cocycle condition $(\d\z)(s',\cdots ,s',w)=0$ to conclude $\z(s',\cdots ,s'w)=\z(s',\cdots, s')=a_{s'}$. Combined with the above equality we get $wa_{s}=a_{s'}$.

When $wD^{+}_{s}=D_{s'}^{-}$, we write $w=w's$, then $w'sw'^{-1}=s'$ and $w'D^{+}_{s}=wD_{s}^{-}=D_{s'}^{+}$. From the case already proven, we get $w'a_{s}=a_{s'}$. It remains to show that $sa_{s}=(-1)^{n}a_{s}$ for then $wa_{s}=w'sa_{s}=(-)^{n}w'a_{s}=(-1)^{n}a_{s'}$. For this we apply $(\d\z)(s,s,\cdots,s)=0$ and using collapsing we see $(\d\z)(s,s,\cdots,s)=s\z(s,\cdots,s)+(-1)^{n+1}\z(s,\cdots ,s)=0$ hence $sa_{s}=(-1)^{n}a_{s}$ holds.  
\end{proof}

\begin{lemma}\label{l:xi van}
Let $\xi\in Z^{n}(W,A)$ be a collapsing $n$-cocycle if $n\ge2$ or satisfying $\xi(1)=0$ if $n=1$. If $\xi(s,\cdots ,s)=0$ for all $s\in S$, then $\xi=0$.
\end{lemma}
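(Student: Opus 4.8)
The plan is to handle $n=1$ by hand and, for $n\ge 2$, to reduce everything to the single hypothesis $\xi(s,\cdots,s)=0$ by a double induction. For $n=1$, a $1$-cocycle satisfies $\xi(xy)=x\xi(y)+\xi(x)$; inducting on $|w|$ and choosing a left descent $s$ of $w\neq 1$ (so $|sw|=|w|-1$) gives $\xi(w)=s\xi(sw)+\xi(s)$, and both summands vanish --- $\xi(sw)$ by induction, $\xi(s)$ by hypothesis. So assume $n\ge 2$, where $\xi$ is collapsing. One consequence of collapsing is used constantly: if some $x_i=1$, the adjacent pair $(x_i,x_{i+1})$, or $(x_{i-1},x_i)$ when $i=n$, is length-additive (i.e.\ $|x_ix_{i+1}|=|x_i|+|x_{i+1}|$), so $\xi(x_1,\cdots,x_n)=0$.

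The heart of the matter is the following claim, to be proved for $k=n,n-1,\cdots,1$ in turn (downward induction on $k$) and, for each fixed $k$, by induction on $L:=k+\sum_{j}|v_j|$:
\begin{equation*}
\xi(\,\underbrace{s,\cdots,s}_{k},\,v_1,\cdots,v_{n-k}\,)=0\qquad\text{for all }s\in S,\ v_1,\cdots,v_{n-k}\in W.
\end{equation*}
The case $k=n$ is exactly the hypothesis. For $k<n$: if $sv_1>v_1$ (in particular if $v_1=1$), the pair $(s,v_1)$ is length-additive and collapsing kills the tuple. Otherwise $s$ is a left descent of $v_1$; write $v_1=sv_1'$ with $|v_1'|=|v_1|-1$ and apply the cocycle relation to the $(n+1)$-tuple $(s,\cdots,s,v_1',v_2,\cdots,v_{n-k})$ with $k+1$ leading copies of $s$. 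Because the adjacent pair $(s,v_1')$, sitting in positions $k+1$ and $k+2$, is length-additive, collapsing annihilates every term of $\d\xi$ except three: the term $s\cdot\xi(s,\cdots,s,v_1',v_2,\cdots,v_{n-k})$ with $k$ leading $s$'s, of total length $L-1$ and so $0$ by the induction on $L$; the term from merging $v_1'$ into $v_2$, namely $\pm\,\xi(s,\cdots,s,v_1'v_2,v_3,\cdots,v_{n-k})$ with $k+1$ leading $s$'s and total length $\le L$, so $0$ by the downward induction on $k$ if the length is still $L$ and by the induction on $L$ otherwise; and the term $\pm\,\xi(s,\cdots,s,v_1,\cdots,v_{n-k})$ from merging $s$ back onto $v_1'$. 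Solving the resulting linear relation for this last term proves the claim. (When $k=n-1$ the tail consists of a single element, the middle term above is absent, and the remaining boundary term of $\d\xi$ is literally $\pm\xi(s,\cdots,s)$, which is $0$ by hypothesis.)

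The lemma now follows. Given any $(x_1,\cdots,x_n)$: if $x_1=1$ we are done by collapsing; otherwise write $x_1=sx_1'$ with $s$ a left descent and apply the cocycle relation to $(s,x_1',x_2,\cdots,x_n)$. Since $(s,x_1')$ is length-additive, collapsing leaves only
\begin{equation*}
\xi(x_1,\cdots,x_n)=s\cdot\xi(x_1',x_2,\cdots,x_n)+\xi(s,x_1'x_2,x_3,\cdots,x_n),
\end{equation*}
and the first summand vanishes by induction on $\sum_i|x_i|$ (or by collapsing if $x_1'=1$) while the second vanishes by the claim with $k=1$.

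The step I expect to be the real obstacle is the bookkeeping inside the claim. All but one of the surviving terms of $\d\xi$ are routine; the exception is the term where a merge occurs across the interface between the already-reduced prefix $s^{k}$ and the tail, producing the entry $v_1'v_2$. Since $|v_1'v_2|$ may equal $|v_1'|+|v_2|$, this term need not have smaller total length, which is exactly what forces the auxiliary downward induction on the number $k$ of leading copies of $s$, anchored at $k=n$ by $\xi(s,\cdots,s)=0$. Checking that collapsing truly annihilates every other term of $\d\xi$, and attending to the degenerate short tuples (such as $k=n-1$ and the case $n=2$), is the fussy but mechanical part.
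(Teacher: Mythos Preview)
Your proof is correct and follows essentially the same approach as the paper's. The paper organizes the double induction as outer induction on total length $L=\sum|x_i|$ and inner downward induction on $\mu=\min\{i:|x_i|\ge2\}$ (and then observes that collapsing forces $x_1=\cdots=x_{\mu-1}=s$, which is your prefix $s^{(k)}$ with $k=\mu-1$), whereas you reverse the order---outer downward induction on $k$, inner induction on $L$---and then peel off the general case in a final short induction; the identification of which terms of $\delta\xi$ survive collapsing, and the handling of the one ``interface merge'' term that need not drop in length, are the same in both.
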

\begin{proof}
We prove $\xi(\un x)$ is identically zero by induction on the total length $L(\un x):=\sum_{i=1}^{n}|x_{i}|$.

If $L(\un x)<n$, then some $x_{i}=1$ and hence $\xi(\un x)=0$ since it is collapsing. If $L(\un x)=n$, then either some $|x_{i}|\ge2$, or all $x_{i}$ are simple reflections. In the latter case, if all $x_{i}$ are the same simple reflection, then $\xi(\un x)=0$ by \eqref{xis}; otherwise $\xi(\un x)=0$ since it is collapsing. 

Now assume $L(\un x)\ge n+1$, and assume  $\xi(\un x')=0$ whenever $L(\un x')<L(\un x)$. Let $\mu(\un x)=\min\{1\le i\le n; |x_{i}|\ge2\}$. We prove  $\xi(\un x)=0$ by downward induction on $\mu(\un x)$. 

If $\mu(\un x)=n$, then $x_{1},\cdots, x_{n-1}\in S\cup\{1\}$. Since $\xi$ is collapsing, $\xi(\un x)=0$ unless all $x_{i}$ are equal to the same $s\in S$ for $1\le i\le n-1$. In the case $x_{1}=\cdots=x_{n-1}=s$,  if $|sx_{n}|>|x_{n}|$, we still  have $\xi(\un x)=0$ by collapsing. If $|sx_{n}|<|x_{n}|$, we write $x_{n}=sx_{n}'$, then the cocycle condition $(\d\xi)(s,s,\cdots, s,s,x_{n}')=0$ together with  the collapsing condition implies $\xi(\un x)=\xi(s,\cdots, s)=0$ by \eqref{xis}.

Now assume $\xi(\un x')=0$ for those $\un x'$ such that either $L(\un x')<L(\un x)$ or $L(\un x')=L(\un x)$ and $\mu(\un x')>\mu(\un x)$. Let $\mu=\mu(\un x)$. By the same argument as in the previous paragraph, $\xi(\un x)=0$ unless $x_{1}=\cdots=x_{\mu-1}=s\in S$, and $x_{\mu}=sx'_{\mu}$ where $|x_{\mu}|>|x_{\mu'}|$. Now apply the cocycle condition $(\d\xi)(s,s,\cdots, s, x'_{\mu},x_{\mu+1},\cdots, x_{n})=0$ (the first $\mu$ entries are $s$). Expanding, we get the terms
\begin{eqnarray}
\label{L1}s\xi(s,s,\cdots, s, x'_{\mu},x_{\mu+1},\cdots, x_{n}), &\quad& \mbox{first $\mu-1$ entries are $s$}\\
\label{L2}\xi(s,\cdots,1,\cdots,s,x'_{\mu}, x_{\mu+1},\cdots, x_{n}), &\quad& \mbox{there are $\mu-1$ such terms}\\
\xi(s,\cdots, s, sx'_{\mu},\cdots, x_{n})=\xi(\un x),&&\\
\label{L4}\xi(s,\cdots, s, x'_{\mu}x_{\mu+1},x_{\mu+2},\cdots, x_{n}),& \quad &\mbox{first $\mu$ entries are $s$}\\
\label{L5}\xi(s,\cdots, s, x'_{\mu},\cdots, x_{i}x_{i+1},\cdots, x_{n}), &\quad& \mbox{first $\mu$ entries are $s$, $\mu<i<n$}\\
\label{L6}\xi(s,\cdots, s, x'_{\mu},x_{\mu+1},\cdots, x_{n-1}).&&
\end{eqnarray}
Now \eqref{L1} has total length less than $L(\un x)$ hence vanishes by inductive hypothesis; \eqref{L2} is zero by collapsing, \eqref{L4}, \eqref{L5} and \eqref{L6} have total length $\le L(\un x)$ and the $\mu$-invariant strictly larger than $\mu(\un x)$, hence they also vanish by inductive hypothesis. Since the sum of all terms is zero, the only remaining term $\xi(\un x)$ must also be zero. This finishes the induction step.
\end{proof}

\subsection{Proof of Theorem \ref{th:univ}}For $s\in S$  let $a_{s}:=\z(s,s,\cdots, s)\in A$.   Define a linear map $a:\ZZ[\cD]^{\ep(n)}\to A$ by sending $\bD_{H}:=(-1)^{[n/2]}[D^{+}_{H}]+(-1)^{[(n+1)/2]}[D^{-}_{H}]$ to $wa_{s}$ if $w\in W$ and $s\in S$ are such that $wD^{+}_{s}=D^{+}_{H}$ (such $(w,s)$ always exists).  To show $a$ is well-defined we need to show that if $(w',s')$ also satisfies $w'D^{+}_{s'}=D^{+}_{H}$, then $wa_{s}=w'a_{s'}$. Now $v=(w')^{-1}w$ satisfies $vD^{+}_{s}=D^{+}_{s'}$. By Lemma \ref{l:as}, $va_{s}=a_{s'}$, hence $wa_{s}=w'a_{s'}$. Therefore $a$ is well-defined. 

To see $a$ is  $W$-equivariant, we need to show that for any $H\in \cH$ and $w\in W$, $a(w\bD_{H})=wa(\bD_{H})$. Let $(w',s)\in W\times S$ be such that $w'D^{+}_{s}=D^{+}_{H}$. Then $a(\bD_{H})=w'a_{s}$ by definition. Now $wD^{+}_{H}$ is either $D^{+}_{wH}$ or $D^{-}_{wH}$. If $wD^{+}_{H}=D^{+}_{wH}$,  then $w\bD_{H}=\bD_{wH}$, hence $a(w\bD_{H})=a(\bD_{wH})$. Since $D^{+}_{wH}=ww'D^{+}_{s}$, we have $a(\bD_{wH})=ww'a_{s}$ by definition. Therefore $a(w\bD_{H})=a(\bD_{wH})=ww'a_{s}=wa(\bD_{H})$ holds. If $wD^{+}_{H}=D^{-}_{wH}$, then $w\bD_{H}=(-1)^{n}\bD_{wH}$, hence $a(w\bD_{H})=(-1)^{n}a(\bD_{wH})$. Since $D^{-}_{wH}=wD^{+}_{H}=ww'D^{+}_{s}$, we have $D^{+}_{wH}=ww'sD^{+}_{s}$, hence $a(\bD_{wH})=ww'sa_{s}$ by definition. By Lemma \ref{l:as}, $sa_{s}=(-1)^{n}a_{s}$, hence $a(\bD_{wH})=(-1)^{n}ww'a_{s}$. Combining these facts we get $a(w\bD_{H})=(-1)^{n}a(\bD_{wH})=ww'a_{s}=wa(\bD_{H})$. This verifies that  $a$ is $W$-equivariant. 

Now let $\xi=\z-a\c Z_{n}^{W}\in Z^{n}(W,A)$, which is again collapsing. From the construction of $a$ and Lemma \ref{l:two prop} we have
\begin{equation}\label{xis}
\xi(s,s\cdots, s)=a_{s}-a(\bD_{s})=0, \forall s\in S.
\end{equation}
By Lemma \ref{l:xi van}, $\xi$ is identically zero, hence $\z=a\c Z_{n}^{W}$.

Finally we prove the uniqueness of $a$. Suppose another $W$-equivariant map $a': \ZZ[\cD]^{\e(n)}\to A$ satisfies $\z=a'\c Z^{W}_{n}$, then $b=a-a'$ satisfies $b\c Z^{W}_{n}=0$. Now $b\c Z^{W}_{n}(s,s,\cdots ,s)=b(\bD_{s})=0$ for all $s\in S$. By $W$-equivariance, $b(\bD_{H})=0$ for all $H\in \cH$, hence $b=0$.  The proof is complete.

\begin{remark} The analogue of the collapsing condition for a $1$-cocycle $\xi\in Z^{1}(W,A)$ is the normalization $\xi(1)=0$. However, $Z^{W}_{1}$ is not in general the universal normalized $1$-cocycle for $W$.
\end{remark}

\begin{exam} When $n=2$, the universal collapsing 2-cocycle $Z^{W}_{2}$ defines an extension
\begin{equation}\label{ext W}
1\to \ZZ[\cH]\to W^{\#}\to W\to 1.
\end{equation}
Here $W^{\#}=\ZZ[\cH]\times W$ as a set, with multiplication $(a,x)(b,y)=(a+xb+Z^{W}_{2}(x,y), \wt x, \wt y)$, where $a,b\in \ZZ[\cH]=\ZZ[\cD]^{+}$, $x,y\in W$ and $\wt x,\wt y$ are the liftings of $x$ and $y$ to the positive braid monoid $B_{W}^{+}$ by reduced words.

When $W$ is a Weyl group, let $P_{W}\subset B_{W}$ be the pure braid group and the braid group associated to $W$. Let $\fra=\xcoch(T)_{\CC}$ and $\fra^{\rs}\subset \fra$ be the complement of the root hyperplanes. Then $P_{W}\cong \pi_{1}(\fra^{\rs}, *)$. Each root $\a$ gives a map $\fra^{\rs}\to \CC^{\times}$ hence a homomorphism $v_{\a}: P_{W}\to\pi_{1}(\CC^{\times})=\ZZ$. The homomorphism $v_{\a}$ only depends on the root hyperplane $H_{\a}$, hence they together define a homomorphism $v: P_{W}\to \ZZ[\cH]$. It is easy to see using the characterization of $Z^{W}_{2}$ that the extension \eqref{ext W} is the pushout of the extension $1\to P_{W}\to B_{W}\to W\to 1$ via the homomorphism $v$.
\end{exam}

\subsection{Cup product} Equip $\ZZ[\cD]$ with the (not necessarily unital) ring structure such that $[D]\cdot [D]=[D]$ for all $D\in \cD$ and $[D]\cdot [D']=0$ if $D\ne D'$ (i.e., the basis elements $[D]$ are orthogonal idempotents). Then $\ZZ[\cD]^{+}\oplus\ZZ[\cD]^{-}$ is a $\ZZ/2\ZZ$-graded subalgebra of $\ZZ[\cD]$. In particular, we have the multiplication map $\mu_{n}: (\ZZ[\cD]^{-})^{\ot n}\to \ZZ[\cD]^{\e(n)}$.

From the definition of $Z^{W}_{n}$, the following proposition is easily verified.
\begin{prop} The cocycle $Z^{W}_{n}$ is the image of the $n$th cup power of $Z^{W}_{1}$ under $\mu_{n}$:
\begin{equation*}
Z^{1}(W,\ZZ[\cD]^{-})^{\ot n}\xr{\cup} Z^{n}(W,(\ZZ[\cD]^{-})^{\ot n})\xr{\mu_{n}} Z^{n}(W,\ZZ[\cD]^{\e(n)}).
\end{equation*}
\end{prop}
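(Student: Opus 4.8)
The statement asserts that the universal collapsing $n$-cocycle $Z^W_n$ equals $\mu_n$ applied to the $n$-fold cup power of $Z^W_1$. The plan is to unwind both sides as explicit functions on $W^n$ valued in $\ZZ[\cD]$ and compare coefficients of each basis vector $[D]$, $D \in \cD$. First I would recall the standard formula for the cup product of cochains in inhomogeneous (bar) cochain complexes: for $\zeta_1, \dots, \zeta_n \in Z^1(W, \ZZ[\cD]^-)$, the cup power is
\begin{equation*}
(\zeta_1 \cup \cdots \cup \zeta_n)(x_1, \dots, x_n) = \zeta_1(x_1) \otimes (x_1\zeta_2(x_2)) \otimes (x_1 x_2 \zeta_3(x_3)) \otimes \cdots \otimes (x_1\cdots x_{n-1}\zeta_n(x_n)),
\end{equation*}
taking values in $(\ZZ[\cD]^-)^{\otimes n}$. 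Taking all $\zeta_i = Z^W_1$, I then apply $\mu_n$, which by the orthogonal-idempotent ring structure kills every pure tensor of basis vectors $[D_1] \otimes \cdots \otimes [D_n]$ unless $D_1 = \cdots = D_n =: D$, in which case it returns $[D]$. So the whole content is bookkeeping of which $D \in \cD$ survive and with what sign.

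The key computation is to make $Z^W_1$ explicit. From \eqref{Zn} with $n=1$, $Z^W_1(x) = \sum_{D \in \cD}(v_D(C_0) - v_D(C_1))[D]$ where $C_0 = C$ and $C_1 = xC_0$; equivalently, by \eqref{Zn'}, $Z^W_1(x) = \sum_{H \text{ separating } C_0, xC_0} \bigl([D^-_H] - [D^+_H]\bigr)$ (since $[1/2] = 0$, $[(1+1)/2]=1$). Now I would compute the $i$th tensor factor $x_1\cdots x_{i-1} Z^W_1(x_i)$ using the identity already exploited in the proof of Proposition \ref{p:closed}, namely $v_{wD}(wC) = v_D(C)$, which gives $w \cdot Z^W_1(x) = \sum_D (v_{wD}(wC_0) - v_{wD}(x wC_0 \cdots))[wD]$; concretely the coefficient of $[D]$ in $x_1\cdots x_{i-1}Z^W_1(x_i)$ is $v_D(C_{i-1}) - v_D(C_i) = h_{i-1} - h_i$ in the notation $h_j = v_D(C_j)$ of that proof. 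Therefore the coefficient of $[D]^{\otimes n}$ — hence, after $\mu_n$, of $[D]$ — in $(Z^W_1)^{\cup n}$ is exactly $\prod_{i=1}^n (h_{i-1} - h_i) = \prod_{i=0}^{n-1}(v_D(C_i) - v_D(C_{i+1}))$, which is precisely the coefficient of $[D]$ in $Z^W_n(x_1,\dots,x_n)$ by \eqref{Zn}. Summing over $D$ gives $\mu_n((Z^W_1)^{\cup n}) = Z^W_n$. I should also check that the source of $\mu_n$ is correct: each factor lands in $\ZZ[\cD]^-$ because $[D^-_H] - [D^+_H]$ is $\sigma$-anti-invariant, and $\mu_n$ maps into $\ZZ[\cD]^{\e(n)}$ because a product of $n$ anti-invariant idempotent-combinations is $\sigma$-invariant iff $n$ is even — consistent with Proposition \ref{p:closed}.

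The main obstacle, such as it is, is purely a matter of sign-tracking and conventions: one must fix a single convention for the bar-cochain cup product and for the $W$-action on $\ZZ[\cD]$, and then verify that no sign like $(-1)^{i}$ sneaks in from the cup-product formula — it does not, because in the cup product of $1$-cochains there is no Koszul sign, and the only signs are those already packaged inside $Z^W_1$ via the $[D^-_H] - [D^+_H]$ combination. A secondary point worth stating carefully is that the cup product of cocycles is again a cocycle and that $\mu_n$, being a $W$-equivariant ring map on the relevant graded piece, carries $Z^n(W, -)$ to $Z^n(W,-)$; both are standard. Given the explicit coefficient match $\prod_{i=0}^{n-1}(v_D(C_i)-v_D(C_{i+1}))$ on both sides, the proposition follows immediately, so I would present it as a short verification rather than a long argument.
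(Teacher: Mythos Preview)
Your proposal is correct and follows essentially the same approach as the paper: both compute the coefficient of each $[D]$ in $\mu_n((Z^W_1)^{\cup n})(x_1,\dots,x_n)$ as the product $\prod_{i=0}^{n-1}(v_D(C_i)-v_D(C_{i+1}))$ and match it against the defining formula \eqref{Zn}. (One cosmetic slip: in your aside invoking \eqref{Zn'} you wrote $[D^-_H]-[D^+_H]$, but $(-1)^{[1/2]}=1$ and $(-1)^{[(1+1)/2]}=-1$ give $[D^+_H]-[D^-_H]$; this does not affect your actual argument, which rests on \eqref{Zn}.)
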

\begin{proof} Let $Z'_{n}=\mu_{n}((Z_{1}^{W})^{\cup n})$. Then $Z_{n}'(x_{1},\cdots, x_{n})=Z_{1}^{W}(x_{1})\cdot x_{1}Z_{1}^{W}(x_{2})\cdots x_{1}\cdots x_{n-1}Z^{W}_{1}(x_{n})$ (where $\cdot$ is the multiplication in $\ZZ[\cD]$). For $H\in \cH$, $[D_{H}^{\pm}]$ appears in $\mu_{n}(Z_{1}^{W})^{\cup n}$ if and only if $[D^{\pm}_{H}]$ appears in each term $x_{1}\cdots x_{i-1}Z^{W}_{1}(x_{i})$, $1\le i\le n$, which happens if and only if $(C_{0},C_{1},\cdots, C_{n})$ is $H$-alternating. Moreover, $[D_{H}^{+}]$ appears in $x_{1}\cdots x_{i-1}Z^{W}_{1}(x_{i})$ with coefficient $(-1)^{i-1}$, therefore it appears in $Z_{n}'$ with coefficient $(-1)^{[n/2]}$. Comparing with \eqref{Zn'} we conclude that $Z_{n}'=Z^{W}_{n}$.
\end{proof}

\subsection{} We have  short exact sequences of $\ZZ[W]$-modules
\begin{eqnarray}
\label{+-}0\to \ZZ[\cD]^{+}\xr{i^{+}} \ZZ[\cD]\xr{p^{-}} \ZZ[\cD]^{-}\to 0\\
\label{-+}0\to \ZZ[\cD]^{-}\xr{i^{-}} \ZZ[\cD]\xr{p^{+}} \ZZ[\cD]^{+}\to 0
\end{eqnarray}
where $i^{\pm}$ are the natural inclusions, and $p^{\pm}([D])=[D]\pm[\s(D)]$ for all $D\in \cD$.

\begin{prop}\label{p:seesaw} For $n\ge1$, the cohomology class $[-Z^{W}_{n+1}]\in \cohog{n+1}{W, \ZZ[\cD]^{\e(n+1)}}$ is the image of $[Z^{W}_{n}]$ under the connecting homomorphism
\begin{equation*}
\cohog{n}{W, \ZZ[\cD]^{\e(n)}}\to \cohog{n+1}{W, \ZZ[\cD]^{\e(n+1)}}
\end{equation*}
attached to \eqref{+-} when $n$ is odd and to \eqref{-+} when $n$ is even. 
\end{prop}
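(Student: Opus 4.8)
The plan is to compute the connecting homomorphism directly on inhomogeneous cochains, using an explicit non-$W$-equivariant splitting of the surjections in \eqref{+-} and \eqref{-+}. Recall that for a short exact sequence $0\to M'\to M\xr{p}M''\to 0$ of $\ZZ[W]$-modules, the connecting map $\cohog{n}{W,M''}\to\cohog{n+1}{W,M'}$ sends $[c]$ to $[d\widetilde{c}\,]$, where $\widetilde{c}\in C^n(W,M)$ is any cochain with $p\widetilde{c}=c$; the coboundary $d\widetilde{c}$ then automatically takes values in $M'$. First I would take $M=\ZZ[\cD]$ and let $\sigma\colon\ZZ[\cD]\to\ZZ[\cD]$ be the $\ZZ$-linear projection with $\sigma[D^+_H]=[D^+_H]$ and $\sigma[D^-_H]=0$ for all $H\in\cH$. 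Restricted to $\ZZ[\cD]^-$ (resp. $\ZZ[\cD]^+$), $\sigma$ is a set-theoretic section of $p^-$ (resp. $p^+$), so $\widetilde{Z}_n:=\sigma\circ Z^W_n$ is an admissible lift of $Z^W_n$ for \eqref{+-} when $n$ is odd and for \eqref{-+} when $n$ is even; thus $\delta[Z^W_n]=[d\widetilde{Z}_n]$, and the proposition amounts to the identity $[d\widetilde{Z}_n]=[-Z^W_{n+1}]$. Moreover $p^{\e(n)}(d\widetilde{Z}_n)=d\,Z^W_n=0$ since $Z^W_n$ is a cocycle (Proposition \ref{p:closed}), so $d\widetilde{Z}_n$ already takes values in $\ZZ[\cD]^{\e(n+1)}$.

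Next I would prove the stronger statement $d\widetilde{Z}_n=-Z^W_{n+1}$ as cochains. Fix a wall $H$ and $\un x=(x_1,\cdots,x_{n+1})$, put $C_i=x_1\cdots x_iC_0$, and let $b_i\in\{0,1\}$ equal $1$ iff $H$ separates $C_i$ from $C_{i+1}$ ($0\le i\le n$). By \eqref{Zn'}, the coefficient of $[D^+_H]$ in $\widetilde{Z}_n(\un y)$, for $\un y=(y_1,\cdots,y_n)\in W^n$, is $(-1)^{[n/2]}$ if $H$ is alternating for the chamber sequence $C_0,y_1C_0,\cdots,y_1\cdots y_nC_0$ and $0$ otherwise. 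Now expand $d\widetilde{Z}_n(\un x)$ term by term. In the leading term $x_1\widetilde{Z}_n(x_2,\cdots,x_{n+1})$ the non-equivariance of $\sigma$ forces the coefficient of $[D^+_H]$ to vanish unless $H$ does not separate $C_0$ from $C_1$ (because then $x_1^{-1}D^+_H=D^-_{x_1^{-1}H}$, which $\sigma$ kills); the face term at $x_jx_{j+1}$ has chamber sequence $(C_0,\cdots,C_{j-1},C_{j+1},\cdots,C_{n+1})$, so the separation indicator at the splice point is $b_{j-1}\oplus b_j=b_{j-1}+b_j-2b_{j-1}b_j$. Collecting everything, the coefficient of $[D^+_H]$ in $d\widetilde{Z}_n(\un x)$ equals $(-1)^{[n/2]}$ times
\[
(1-b_0)b_1\cdots b_n+\sum_{j=1}^{n}(-1)^j\,b_0\cdots b_{j-2}(b_{j-1}\oplus b_j)b_{j+1}\cdots b_n+(-1)^{n+1}b_0\cdots b_{n-1}
\]
(empty products being $1$). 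Substituting $b_{j-1}\oplus b_j=b_{j-1}+b_j-2b_{j-1}b_j$ and writing $P=\prod_{k=0}^n b_k$, $\widehat{P}_i=\prod_{k\ne i}b_k$, all the $\widehat{P}_i$ contributions cancel telescopically and the bracket collapses to $(-1)^{n+1}P$.

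Hence the coefficient of $[D^+_H]$ in $d\widetilde{Z}_n(\un x)$ is $(-1)^{[n/2]+n+1}$ when $(C_0,\cdots,C_{n+1})$ is $H$-alternating and $0$ otherwise. By \eqref{Zn'}, the coefficient of $[D^+_H]$ in $Z^W_{n+1}(\un x)$ is $(-1)^{[(n+1)/2]}$ exactly on the $H$-alternating tuples, and the elementary identity $[(n+1)/2]-[n/2]\equiv n\pmod 2$ gives $(-1)^{[n/2]+n+1}=-(-1)^{[(n+1)/2]}$. So $d\widetilde{Z}_n$ and $-Z^W_{n+1}$ have the same coefficient of $[D^+_H]$ for every wall $H$; since both are cochains valued in $\ZZ[\cD]^{\e(n+1)}$, where an element of $\ZZ[\cD]^+$ (resp. $\ZZ[\cD]^-$) is determined by the coefficients of the $[D^+_H]$ (the coefficient of $[D^-_H]$ being equal to, resp. the negative of, that of $[D^+_H]$), we conclude $d\widetilde{Z}_n=-Z^W_{n+1}$, whence $\delta[Z^W_n]=[-Z^W_{n+1}]$. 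The step I expect to be the main obstacle is the term-by-term expansion of $d\widetilde{Z}_n(\un x)$: one must keep the signs straight, correctly model a face term as deletion of a chamber, and remember that the non-equivariance of $\sigma$ suppresses half of the leading term; the subsequent telescoping collapse and the parity check are mechanical.
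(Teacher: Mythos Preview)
Your proposal is correct and follows essentially the same strategy as the paper: both use the identical lift $\wt Z_n$ (your $\sigma\circ Z^W_n$ equals the paper's $\wt Z_n(\un x)=\sum_H(-1)^{[n/2]}[D^+_H]$), both verify $d\wt Z_n=-Z^W_{n+1}$ on the nose, and both reduce to computing the $[D^+_H]$-coefficient after noting $d\wt Z_n\in\ZZ[\cD]^{\e(n+1)}$. The only difference is bookkeeping: the paper works with $h_i=v_{D^+_H}(C_i)\in\{0,1\}$ and reuses the cancellation identity \eqref{hh} from Proposition~\ref{p:closed} to isolate the leading term, whereas you work with the separation indicators $b_i=h_i\oplus h_{i+1}$ and perform a self-contained telescoping; your computation of the leading term's $[D^+_H]$-coefficient as $(1-b_0)b_1\cdots b_n$ is a slightly more direct replacement for the paper's argument that the $[D^-_H]$-contribution of $x_1\wt Z_n$ forces $(C_0,\dots,C_{n+1})$ to be $H$-alternating.
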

\begin{proof}
Let $\wt Z_{n}$ be the lifting of $Z_{n}^{W}$ to a function $W^{n}\to \ZZ[\cD]$ defined by $\wt Z_{n}(x_{1},\cdots, x_{n})=\sum_{H}(-1)^{[n/2]}[D^{+}_{H}]$ where $H$ runs over those walls such that $(C_{0},C_{1},\cdots, C_{n})$ is $H$-alternating (as usual $C_{i}=x_{1}\cdots x_{i}C_{0}$). To prove the Proposition, it suffices to check that
\begin{equation*}
\d\wt Z_{n}=-Z_{n+1}^{W}.
\end{equation*} 
Expanding $\d\wt Z_{n}(x_{1},\cdots, x_{n+1})$ as we did for $\d Z^{W}_{n}$ in the proof of Prop.\ref{p:closed}. Note that all the terms except for the first one $x_{1}\wt Z_{n}(x_{2},\cdots, x_{n+1})$ only involve $[D_{H}^{+}]$ for various $H\in \cH$. Let $h_{i}=v_{D_{H}^{+}}(C_{i})$. The coefficient of $[D^{+}_{H}]$ in  $\d Z^{W}_{n}(x_{1},\cdots, x_{n+1})-x_{1}\wt Z_{n}(x_{2},\cdots, x_{n+1})$ is the sum of all but the first term in \eqref{hh}. Since the sum in \eqref{hh} is zero, the coefficient of $[D^{+}_{H}]$ in  $\d Z^{W}_{n}(x_{1},\cdots, x_{n+1})-x_{1}\wt Z_{n}(x_{2},\cdots, x_{n+1})$ is $-(h_{1}-h_{2})\cdots (h_{n}-h_{n+1})$, which is nonzero if and only if $(C_{1},\cdots, C_{n+1})$ is $H$-alternating, in which case the coefficient is $\pm1$. Since we know $\d Z_{n}=0$, $\d\wt Z_{n}$ must take values in $\ZZ[\cD]^{\e(n+1)}$, therefore, if $(C_{1},\cdots, C_{n+1})$ is $H$-alternating, and that $[D^{\pm}_{H}]$ both appear with nonzero coefficients in $\d\wt Z_{n}(x_{1},\cdots, x_{n+1})$, $[D^{-}_{H}]$ must appear with nonzero coefficient in $x_{1}\wt Z_{n}(x_{2},\cdots, x_{n+1})$. This means $x_{1}^{-1}D^{-}_{H}=D_{H'}^{+}$ for some $H'=x_{1}^{-1}H$, hence $C_{0}\subset x_{1}^{-1}D^{-}_{H}$, or $C_{1}\subset D^{-}_{H}$, i.e., $(C_{0},C_{1},\cdots, C_{n+1})$ is $H$-alternating. In this case, the coefficient of $[D_{H}^{+}]$ in $\d\wt Z_{n}(x_{1},\cdots, x_{n+1})$ is $-(h_{1}-h_{2})\cdots (h_{n}-h_{n+1})=-\prod_{i=0}^{n}(h_{i}-h_{i+1})$, which is the negative of the coefficient of $[D^{+}_{H}]$ in $Z_{n+1}^{W}$. This completes the proof.
\end{proof}

\section{Higher signs}\label{s:signs}

\begin{defn} 
\begin{enumerate}
\item For $n\ge2$ even, let  $\chi_{n}: \ZZ[\cD]^{+}=\ZZ[\cH]\to \ZZ$ be the linear map sending each basis element $[D^{+}_{H}]+[D^{-}_{H}]$ to $(-1)^{n/2}$ (where $H\in\cH$ ). 
\item For $n\ge1$ odd, let $\chi_{n}: \ZZ[\cD]^{-}\to \FF_{2}$ be the linear map sending each $[D^{+}_{H}]-[D^{-}_{H}]$ to $1\in \FF_{2}$. (Of course all the $\chi_{n}$ are the same for $n$ odd.)
\item For $n\ge1$, define $\e^{W}_{n}\in Z^{n}(W,\ZZ)$ for $n$ even and $\e^{W}_{n}\in Z^{n}(W,\FF_{2})$ for $n$ odd to be the image of $Z^{W}_{n}$ under the $W$-invariant map $\chi_{n}$.
\end{enumerate}
\end{defn}

Concretely, for $\un x=(x_{1},\cdots ,x_{n})\in W^{n}$, $\e_{n}(\un x)$ is the number (resp. the parity of the number when $n$ is odd) of walls $H\in \cH$ such that $(C_{0},C_{1},\cdots, C_{n})$ is alternating with respect to $H$. Here  $C_{i}=x_{1}\cdots x_{i}C$ for $1\le i\le n$.

\subsection{Proof of Theorem \ref{th:sign}}
Since $\e_{n}^{W}$ is the homomorphic image of a collapsing cocycle, it is also collapsing. By Lemma \ref{l:two prop}(2) and the definition of $\chi_{n}$,   we have $\e_{n}^{W}(s,\cdots, s)=1$. By Lemma \ref{l:xi van}, these two properties (collapsing and values at $(s,\cdots, s)$ ) also characterize $\e_{n}^{W}$ as $\ZZ$-valued or $\FF_{2}$-valued cocycles.

\begin{remark}[Lusztig] One can define $\e_{n}^{W}$ without using the notion of chambers and walls. Indeed, for  $w\in W$ there is a well-defined set of reflections $\bT_{w}$ in $W$ as in \cite[Ch. IV, \S1.4, Lemme 1,2]{B} (and $\# \bT_{w}=|w|$). For $(x_{1},\cdots ,x_{n})\in W^{n}$, let $\bT(x_{1},\cdots ,x_{n})$ be the intersection
\begin{equation*}
\bT(x_{1},\cdots ,x_{n}):=\bigcap_{i=1}^{n}x_{1}\cdots x_{i-1}\cdot \bT_{x_{i}}
\end{equation*}
where the $i=1$ term is $\bT_{x_{1}}$. Then
\begin{equation*}
\e_{n}^{W}(x_{1},\cdots ,x_{n})=\begin{cases} \# \bT(x_{1},\cdots ,x_{n}), & \mbox{ if $n$ is even};\\
\# \bT(x_{1},\cdots ,x_{n}) \mod 2, & \mbox{ if $n$ is odd}.
\end{cases}
\end{equation*}
To see this, we only need to observe that two chambers $xC_{0}$ and $xyC_{0}$ lie on different sides of the wall $H_{r}$ given by some reflection $r\in W$  if and only if $r\in x\bT_{y}$.
\end{remark}

\begin{remark} One can refine the cocycles $\e^{W}_{n}$ slightly as follows. Let $\un S$ be the set of $W$-orbits on $\cH$; the map $S\to \un S$ sending $s$ to the $W$-orbit of $H_{s}$ is surjective. For $n$ even, we have the $W$-invariant map $\wt\chi_{n}: \ZZ[\cD]^{+}\to \ZZ[\un S]$ sending $[D^{+}_{H}]+[D^{-}_{H}]$ to $ (-1)^{n/2}[\un H]$, where $\un H$ is the $W$-orbit of $H\in\cH$. For $n$ odd, we also have the $W$-invariant map $\wt\chi_{n}: \ZZ[\cD]^{-}\to \FF_{2}[\un S]$ sending $[D^{+}_{H}]-[D^{-}_{H}]$ to $[\un H]$. The image of $Z_{n}^{W}$ under $\wt\chi_{n}$ defines an $n$-cocycle $\wt\e_{n}^{W}\in Z^{n}(W, \ZZ[\un S])$ if $n$ is even and $\wt\e_{n}^{W}\in Z^{n}(W, \FF_{2}[\un S])$ if $n$ is odd. 
\end{remark}

\begin{prop} If $n\ge1$ is odd, then the cohomology class $[-\e^{W}_{n+1}]\in \cohog{n+1}{W,\ZZ}$ is the image of $[\e^{W}_{n}]$ under the Bockstein homomorphism
\begin{equation*}
\b: \cohog{n}{W,\FF_{2}}\to \cohog{n+1}{W,\ZZ}.
\end{equation*}  
\end{prop}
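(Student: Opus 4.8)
The plan is to exhibit both the connecting homomorphism of Proposition~\ref{p:seesaw} (for $n$ odd) and the Bockstein $\b$ as boundary maps attached to a single morphism of short exact sequences of $\ZZ[W]$-modules, and then finish by naturality of the long exact cohomology sequence. Recall that $\b$ is the connecting homomorphism of the short exact sequence of trivial $\ZZ[W]$-modules
\[
0\to \ZZ\xr{\times 2}\ZZ\to \FF_{2}\to 0 .
\]

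First I would construct a morphism from \eqref{+-} to this sequence. Concretely, consider the diagram
\[
\begin{array}{ccccccccc}
0 & \to & \ZZ[\cD]^{+} & \xr{i^{+}} & \ZZ[\cD] & \xr{p^{-}} & \ZZ[\cD]^{-} & \to & 0\\
 & & \downarrow{\chi_{n+1}} & & \downarrow{\psi} & & \downarrow{\chi_{n}} & & \\
0 & \to & \ZZ & \xr{\times 2} & \ZZ & \to & \FF_{2} & \to & 0,
\end{array}
\]
where the bottom-right map is reduction mod $2$, the map on cokernels $\ZZ[\cD]/\ZZ[\cD]^{+}\cong\ZZ[\cD]^{-}\to\FF_{2}$ is the one induced by the middle vertical, and $\psi\colon\ZZ[\cD]\to\ZZ$ is the $W$-invariant homomorphism sending every basis vector $[D]$ ($D\in\cD$) to $(-1)^{(n+1)/2}$. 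Commutativity is a one-line check on basis elements: $\psi(i^{+}([D^{+}_{H}]+[D^{-}_{H}]))=2(-1)^{(n+1)/2}=(\times 2)(\chi_{n+1}([D^{+}_{H}]+[D^{-}_{H}]))$ for the left square; and the reduction of $\psi$ sends each $[D]$ to $1\in\FF_{2}$, while $\chi_{n}\circ p^{-}$ sends $[D^{\pm}_{H}]$ to $\chi_{n}(\pm([D^{+}_{H}]-[D^{-}_{H}]))=\pm 1=1$ in $\FF_{2}$, so the induced map on cokernels is exactly $\chi_{n}$ and the right square commutes. All three verticals are $W$-equivariant: $\chi_{n}$ and $\chi_{n+1}$ are $W$-invariant by definition, and $\psi$ is constant on the $W$-set $\cD$.

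By functoriality of the long exact sequence in group cohomology, the square
\[
\begin{array}{ccc}
\cohog{n}{W,\ZZ[\cD]^{-}} & \xr{\ \d\ } & \cohog{n+1}{W,\ZZ[\cD]^{+}}\\
\downarrow{(\chi_{n})_{*}} & & \downarrow{(\chi_{n+1})_{*}}\\
\cohog{n}{W,\FF_{2}} & \xr{\ \b\ } & \cohog{n+1}{W,\ZZ}
\end{array}
\]
commutes, where $\d$ is the connecting homomorphism of \eqref{+-}. Now $(\chi_{n})_{*}[Z^{W}_{n}]=[\e^{W}_{n}]$ and $(\chi_{n+1})_{*}[Z^{W}_{n+1}]=[\e^{W}_{n+1}]$ by the definition of the higher signs, and Proposition~\ref{p:seesaw} gives $\d[Z^{W}_{n}]=[-Z^{W}_{n+1}]$. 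Chasing $[Z^{W}_{n}]$ around the square and using linearity of $(\chi_{n+1})_{*}$ yields
\[
\b[\e^{W}_{n}]=\b\bigl((\chi_{n})_{*}[Z^{W}_{n}]\bigr)=(\chi_{n+1})_{*}\bigl(\d[Z^{W}_{n}]\bigr)=(\chi_{n+1})_{*}[-Z^{W}_{n+1}]=[-\e^{W}_{n+1}],
\]
as claimed.

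There is no genuine obstacle here beyond the bookkeeping: the one point requiring care is choosing the middle map $\psi$ so that the left square commutes (forcing the scalar $(-1)^{(n+1)/2}$, compatibly with the normalization of $\chi_{n+1}$) while the right square still reduces to $\chi_{n}$; once $\psi$ is pinned down, everything is formal.
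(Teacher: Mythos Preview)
Your proposal is correct and follows essentially the same approach as the paper: the paper also constructs a morphism from the short exact sequence \eqref{+-} to $0\to\ZZ\xr{2}\ZZ\to\FF_{2}\to 0$ with middle vertical map (called $\th_{n+1}$ there) sending every $[D]$ to $(-1)^{(n+1)/2}$, and then invokes naturality of the connecting homomorphism together with Proposition~\ref{p:seesaw}. Your $\psi$ is exactly the paper's $\th_{n+1}$, and you have simply spelled out the commutativity checks in more detail.
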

\begin{proof} Let $\th_{n+1}:\ZZ[\cD]\to \ZZ$ be the linear map sending all $[D]$ to $(-1)^{(n+1)/2}$. 
We have a commutative diagram of $\ZZ[W]$-modules where the rows are exact sequences 
\begin{equation*}
\xymatrix{0\ar[r] & \ZZ[\cD]^{+}\ar[r]^-{i^{+}}\ar[d]^{\chi_{n+1}} & \ZZ[\cD]\ar[d]^{\th_{n+1}} \ar[r]^-{p^{-}} & \ZZ[\cD]^{-} \ar[r]\ar[d]^{\chi_{n}} & 0\\
0\ar[r] & \ZZ\ar[r]^-{2} & \ZZ\ar[r] & \ZZ/2\ZZ\ar[r] & 0
}
\end{equation*}
This gives a commutative diagram of connecting homomorphisms
\begin{equation*}
\xymatrix{\cohog{n}{W,\ZZ[\cD]^{-}}\ar[r]\ar[d]^{\chi_{n}} & \cohog{n+1}{W,\ZZ[\cD]^{+}}\ar[d]^{\chi_{n+1}}\\
\cohog{n}{W,\FF_{2}}\ar[r]^{\b} & \cohog{n+1}{W,\ZZ}
}
\end{equation*}
The equality $[-\e^{W}_{n+1}]=\b[Z^{W}_{n}]$ then follows from Prop.\ref{p:seesaw}.
\end{proof}

The cocycles $\e_{1}^{W}$ and $\e_{2}^{W}$ are easy to compute.
\begin{prop}
\begin{enumerate}
\item The cocycle $\e^{W}_{1}$ is the sign homomorphism $W\ni x\mapsto |x|\mod2$.
\item For $x,y\in W$,  $\e^{W}_{2}(x,y)=\frac{1}{2}(|x|+|y|-|xy|)\in\ZZ$. 
\end{enumerate}
\end{prop}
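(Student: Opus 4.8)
The plan is to compute both cocycles directly from the concrete description given just before the statement: $\e_n^W(\un x)$ counts (the parity of the count, when $n$ is odd, of) the walls $H\in\cH$ for which the chamber sequence $(C_0,C_1,\dots,C_n)$ is $H$-alternating, where $C_i=x_1\cdots x_i C_0$.

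\medskip

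\emph{Part (1).} For $n=1$ the sequence is just $(C_0, xC_0)$, and ``$H$-alternating'' means $H$ separates $C_0$ from $xC_0$. By \cite[Ch. IV, \S1.4]{B} (or the observation in the Lusztig remark with $\bT(x)=\bT_x$), the set of such walls is exactly $\bT_x$, which has cardinality $|x|$. Since $n=1$ is odd, $\e^W_1(x)$ is this count mod $2$, i.e.\ $|x|\bmod 2$, which is the sign homomorphism. Alternatively one can simply invoke Theorem \ref{th:sign}: the sign homomorphism $x\mapsto|x|\bmod 2$ is a $1$-cocycle valued in $\FF_2$, it takes value $1$ on each simple reflection $s$, and (vacuously, since the collapsing condition is empty for $n=1$) it is the unique such — but one should be a little careful, since the uniqueness in Lemma \ref{l:xi van} for $n=1$ uses only $\xi(1)=0$, which the sign homomorphism satisfies; so this identification is immediate from Lemma \ref{l:xi van} applied to $\xi=\e^W_1-(\text{sign})$.

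\medskip

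\emph{Part (2).} For $n=2$ and $x,y\in W$, set $C_0$, $C_1=xC_0$, $C_2=xyC_0$. Then $\e^W_2(x,y)$ is the number of walls $H$ with $C_1$ strictly between the sides containing $C_0$ and $C_2$ in the sense that $H$ separates $C_0$ from $C_1$ \emph{and} separates $C_1$ from $C_2$; equivalently, $H$ separates $C_0$ from $C_1$ but does \emph{not} separate $C_0$ from $C_2$. Writing $\mathcal{S}(C,C')$ for the set of walls separating two chambers $C,C'$ (so $\#\mathcal S(C_0,C_1)=|x|$, $\#\mathcal S(C_1,C_2)=|y|$ since the latter is the $W$-translate by $x$ of $\mathcal S(C_0,yC_0)$, and $\#\mathcal S(C_0,C_2)=|xy|$), one uses the elementary ``gallery distance'' identity for three chambers in a Coxeter complex: $\mathcal S(C_0,C_2)$ is the symmetric difference $\mathcal S(C_0,C_1)\bigtriangleup\mathcal S(C_1,C_2)$, since a wall separates $C_0$ from $C_2$ iff it separates exactly one of the pairs $(C_0,C_1)$, $(C_1,C_2)$. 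Hence $\#\mathcal S(C_0,C_1)+\#\mathcal S(C_1,C_2)-\#\mathcal S(C_0,C_2)=2\,\#\bigl(\mathcal S(C_0,C_1)\cap\mathcal S(C_1,C_2)\bigr)$, and the right-hand intersection is precisely the set counted by $\e^W_2(x,y)$. This gives $\e^W_2(x,y)=\tfrac12(|x|+|y|-|xy|)$.

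\medskip

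The only genuine input beyond bookkeeping is the symmetric-difference identity for separating-wall sets of three chambers, which is standard for Coxeter complexes (it is the statement that $d(C_0,C_2)\le d(C_0,C_1)+d(C_1,C_2)$ refined to the level of walls, and follows e.g.\ from the fact that a minimal gallery from $C_0$ to $C_2$ crosses each wall at most once together with the characterization of walls crossed by a gallery); I anticipate this being the step to state carefully, though it is not hard. Everything else is unwinding the definition of $\e^W_n$ and the equality $\#\bT_w=|w|$ already recalled in the Lusztig remark.
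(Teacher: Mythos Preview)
Your proof is correct. The paper actually gives no proof for this proposition at all: it is stated immediately after the sentence ``The cocycles $\e_{1}^{W}$ and $\e_{2}^{W}$ are easy to compute'' and left without argument. Your direct computation from the concrete description of $\e_{n}^{W}$ (counting $H$-alternating walls) is precisely the intended argument, and the symmetric-difference identity $\cS(C_{0},C_{2})=\cS(C_{0},C_{1})\bigtriangleup\cS(C_{1},C_{2})$ for separating walls is indeed the key observation for part~(2); it is immediate from the fact that each wall has exactly two sides, so there is no gap there.
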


The following properties of $\e^{W}_{n}$ follows directly from the uniqueness in Theorem \ref{th:sign}.
\begin{lemma}\label{l:prop sign}
\begin{enumerate}
\item $\e_{n}^{W}$ is invariant under any automorphism of $(W,S)$.
\item For any $x_{1},\cdots, x_{n}\in W$, we have $\e^{W}_{n}(x_{1},\cdots,x_{n})=\e^{W}_{n}(x_{n}^{-1}, \cdots ,x_{1}^{-1})$.
\item Suppose $(W',S')\subset (W,S)$ is a standard parabolic subgroup, then $\e^{W}_{n}|_{W'}=\e^{W'}_{n}$. 
\item Let $(W_{1},S_{1})$ and $(W_{2},S_{2})$ be Coxeter groups and $(W,S)=(W_{1}\times W_{2},S_{1}\coprod S_{2})$. Let  $p_{i}: W\to W_{i}$ be the projection for $i=1,2$. Then $\e_{n}^{W}=p_{1}^{*}\e_{n}^{W_{1}}+p_{2}^{*}\e_{n}^{W_{2}}$.
\end{enumerate}
\end{lemma}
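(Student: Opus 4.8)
The plan is to deduce all four parts from the uniqueness in Theorem~\ref{th:sign} (equivalently, from Lemma~\ref{l:xi van}): in each case I would exhibit a cocycle $\z$ on $W$ (resp.\ on $W'$) valued in $\ZZ$ or $\FF_2$ according to the parity of $n$, check that it is collapsing when $n\ge2$ and that $\z(1)=0$ when $n=1$, check that $\z(s,\cdots,s)=1$ for every $s\in S$, and then conclude $\z=\e^{W}_{n}$ by applying Lemma~\ref{l:xi van} to the difference $\z-\e^{W}_{n}$. The only nonformal ingredients are four standard facts about lengths in Coxeter groups: an automorphism of $(W,S)$ preserves $|\cdot|$; inversion preserves $|\cdot|$ and $|(xy)^{-1}|=|xy|$; the length function of a standard parabolic $W'$ is the restriction of that of $W$; and $|\cdot|$ is additive on a direct product.

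For part (1), given an automorphism $\phi$ of $(W,S)$ I would take $\z=\phi^{*}\e^{W}_{n}$, i.e.\ $\z(x_{1},\cdots,x_{n})=\e^{W}_{n}(\phi x_{1},\cdots,\phi x_{n})$. Since the coefficient module has trivial action, pullback along the group automorphism $\phi$ commutes with $\d$, so $\z$ is a cocycle; it is collapsing (resp.\ normalized, if $n=1$) because $\phi$ preserves length, and $\z(s,\cdots,s)=\e^{W}_{n}(\phi s,\cdots,\phi s)=1$ because $\phi(s)\in S$. For part (3) the same mechanism applies even more directly: because the $W'$-length agrees with the $W$-length on $W'$, the restriction $\e^{W}_{n}|_{W'}$ is automatically a collapsing (resp.\ normalized) cocycle on $W'$ taking the value $1$ on each $(s,\cdots,s)$, $s\in S'$, so it is forced to equal $\e^{W'}_{n}$. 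For part (4) I would set $\z=p_{1}^{*}\e^{W_{1}}_{n}+p_{2}^{*}\e^{W_{2}}_{n}$; collapsing follows because, writing $x_{j}=(x_{j}^{(1)},x_{j}^{(2)})$, additivity of length over $W_{1}\times W_{2}$ plus the subadditivity $|ab|\le|a|+|b|$ in each factor turns $|x_{i}x_{i+1}|=|x_{i}|+|x_{i+1}|$ into $|x_{i}^{(m)}x_{i+1}^{(m)}|=|x_{i}^{(m)}|+|x_{i+1}^{(m)}|$ for both $m=1,2$, killing both summands; and for $s\in S_{1}$ we get $\z(s,\cdots,s)=\e^{W_{1}}_{n}(s,\cdots,s)+\e^{W_{2}}_{n}(1,\cdots,1)=1+0=1$ (symmetrically for $s\in S_{2}$), while $\z(1)=0$ when $n=1$.

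Part (2) is the one place where a small amount of real work is needed, because the ``reverse-and-invert'' operator $\rho$, $(\rho f)(x_{1},\cdots,x_{n})=f(x_{n}^{-1},\cdots,x_{1}^{-1})$, is not visibly a chain map. I would verify by a direct expansion that on $n$-cochains with trivial coefficients $\d\circ\rho=(-1)^{n+1}\rho\circ\d$: in $\d\rho f$ and in $\rho\d f$ the ``merged'' terms $f(\cdots,(x_{j}x_{j+1})^{-1},\cdots)$ correspond under the re-indexing $j\leftrightarrow n+1-j$, and the two boundary terms of each expansion also match up, with every pair of matching signs differing by the fixed factor $(-1)^{n+1}$. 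Hence $\z=\rho(\e^{W}_{n})$ is again a cocycle. It is collapsing because if $|x_{i}x_{i+1}|=|x_{i}|+|x_{i+1}|$ then, taking inverses, the consecutive pair in position $n-i$ of $(x_{n}^{-1},\cdots,x_{1}^{-1})$ is length-additive; and $\z(s,\cdots,s)=\e^{W}_{n}(s,\cdots,s)=1$ since $s^{-1}=s$. So $\z=\e^{W}_{n}$, which is the assertion.

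I expect the cochain identity $\d\rho=(-1)^{n+1}\rho\d$ in part (2) to be the only genuine (though entirely routine) obstacle; parts (1), (3), (4) are immediate once the uniqueness statement is invoked, and indeed for $n=1$ all four parts just restate elementary properties of the sign homomorphism $x\mapsto|x|\bmod 2$.
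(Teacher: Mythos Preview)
Your proposal is correct and follows exactly the route the paper indicates: the paper simply asserts that all four parts ``follow directly from the uniqueness in Theorem~\ref{th:sign},'' and you have faithfully unpacked that sentence, supplying in each case the candidate cocycle and verifying the two characterizing properties (collapsing, resp.\ $\z(1)=0$, and $\z(s,\dots,s)=1$). Your treatment of part~(2), including the chain identity $\d\circ\rho=(-1)^{n+1}\rho\circ\d$ for trivial coefficients, is the only place where any real computation is required, and your sketch of it is accurate.
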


\begin{lemma}\label{l:prod}
\begin{enumerate}
\item Suppose $W=\ZZ/2\ZZ$, then the cohomology class $[\e^{W}_{n}]$ is the nonzero element in $\cohog{n}{W,\ZZ}=\FF_{2}$ if $n$ is even or in $\cohog{n}{W,\FF_{2}}=\FF_{2}$ if $n$ is odd .
\item If $w\in W$ is conjugate to a product of $m$ {\em commuting} simple reflections (so $w$ is itself an involution), then the class $[\e_{n}|_{\j{w}}]=m\mod2$ as an element in  $\cohog{n}{\j{w}, \ZZ}=\FF_{2}$ if $n$  is even or  $\cohog{n}{W,\FF_{2}}=\FF_{2}$ if $n$ is odd.
\end{enumerate}
\end{lemma}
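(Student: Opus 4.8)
The plan is to prove both parts by reducing to explicit computations on cyclic groups, using the functoriality properties already established.

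For part (1), where $W=\ZZ/2\ZZ=\{1,s\}$, I would note that $\cH$ consists of a single wall $H_s$, so $\ZZ[\cD]^{\e(n)}$ is free of rank one on the basis element $\bD_{s}=(-1)^{[n/2]}[D^+_s]+(-1)^{[(n+1)/2]}[D^-_s]$. By Lemma \ref{l:two prop}(2), $Z^W_n(s,\dots,s)=\bD_s$, and since $W$ acts on this rank-one module through the sign character (as $sD^+_s=D^-_s$ shows $s\bD_s=(-1)^n\bD_s$), the module $\ZZ[\cD]^{\e(n)}$ is just $\ZZ$ with trivial action when $n$ is even and the sign representation when $n$ is odd — in either case, after applying $\chi_n$, we land in $\ZZ$ (trivial, $n$ even) or $\FF_2$ ($n$ odd) and $\e^W_n(s,\dots,s)=\pm1\ne 0$. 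So it suffices to show that the cocycle $(s,\dots,s)\mapsto 1$ represents the nonzero class in $\cohog{n}{\ZZ/2\ZZ,-}$. I would invoke the standard computation of the cohomology of $\ZZ/2\ZZ$: with trivial $\ZZ$-coefficients $\cohog{n}{\ZZ/2\ZZ,\ZZ}=\FF_2$ for $n$ even positive, generated by the $n$th cup power of the generator of $\cohog{2}{}$, and with $\FF_2$-coefficients the cohomology ring is $\FF_2[t]$ with $t$ in degree one. Concretely, the bar cocycle sending every tuple to $1$ is the $n$th cup power of $\e^W_1$; since $\e^W_1$ generates $\cohog{1}{\ZZ/2\ZZ,\FF_2}$ and the cup product structure is the polynomial ring, its $n$th power is nonzero for all $n$ (and the Bockstein/integral statement follows from the previous proposition relating $[-\e^W_{n+1}]$ to $\b[\e^W_n]$). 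Alternatively one checks directly that $(s,\dots,s)\mapsto 1$ is not a coboundary by a short argument with the standard periodic resolution of $\ZZ/2\ZZ$.

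For part (2), I would use functoriality: let $w\in W$ be conjugate by some $g\in W$ to $w_0=s_1\cdots s_m$ where the $s_i\in S$ are distinct and pairwise commuting. Conjugation by $g$ identifies $\j{w}$ with $\j{w_0}$ compatibly with the cocycles by Lemma \ref{l:prop sign}(1) (an automorphism of $(W,S)$ is not needed — inner conjugation suffices since $\e^W_n$ is a cocycle on $W$ and restriction along conjugation by $g$ changes a cocycle by a coboundary, so the cohomology class is unchanged). Thus $[\e^W_n|_{\j{w}}]=[\e^W_n|_{\j{w_0}}]$. Now $w_0$ lies in the standard parabolic subgroup $W'=\j{s_1,\dots,s_m}\cong(\ZZ/2\ZZ)^m$, and by Lemma \ref{l:prop sign}(3), $\e^W_n|_{W'}=\e^{W'}_n$. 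By Lemma \ref{l:prop sign}(4) applied inductively, $\e^{W'}_n=\sum_{i=1}^m p_i^*\e^{\ZZ/2\ZZ}_n$ where $p_i:W'\to\ZZ/2\ZZ$ is projection onto the $i$th factor. Restricting further to the diagonal $\j{w_0}\hookrightarrow W'$, each $p_i$ restricts to the isomorphism $\j{w_0}\isom\ZZ/2\ZZ$, so $\e^W_n|_{\j{w_0}}=\sum_{i=1}^m \e^{\j{w_0}}_n = m\cdot\e^{\j{w_0}}_n$ at the level of cocycles, hence $[\e^W_n|_{\j{w}}]=m\,[\e^{\ZZ/2\ZZ}_n]=m\bmod 2$ by part (1), since the coefficient group ($\ZZ$ with trivial action for $n$ even, giving $\cohog{n}{}=\FF_2$; or $\FF_2$ for $n$ odd) has characteristic two in the relevant degree.

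The main obstacle is part (1): all of part (2) is a formal assembly of the functoriality lemmas once (1) is in hand. For (1) the real content is the nonvanishing of the cohomology class, which rests on the classical computation of $\cohog{*}{\ZZ/2\ZZ}$; the only subtlety is bookkeeping the module structure on $\ZZ[\cD]^{\e(n)}$ (trivial versus sign) and checking that after applying $\chi_n$ we genuinely land in the group-cohomology group claimed (in particular that $\cohog{n}{\ZZ/2\ZZ,\ZZ}\cong\FF_2$ for even $n\ge 2$, which is where the "$=\FF_2$" in the statement comes from). I would handle this either by the cup-power description ($\e^W_n$ restricted to $\ZZ/2\ZZ$ is the $n$th power of the degree-one generator under the ring structure of the previous cup-product proposition) or by the Bockstein relation of the preceding proposition, which immediately propagates nonvanishing from odd $n$ to even $n+1$; the odd case $n=1$ is just that $\e^{\ZZ/2\ZZ}_1$ is the sign homomorphism, manifestly the nonzero element of $\cohog{1}{\ZZ/2\ZZ,\FF_2}=\Hom(\ZZ/2\ZZ,\FF_2)=\FF_2$.
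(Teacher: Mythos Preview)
Your proposal is correct. The paper states Lemma~\ref{l:prod} without proof, so there is no argument to compare against; your approach is the natural one and is exactly what the paper's surrounding lemmas are set up for.

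A couple of minor comments. In part~(1) your phrase ``the bar cocycle sending every tuple to $1$'' is a slip: for $W=\ZZ/2\ZZ$ the cocycle $\e^{W}_{n}$ sends $(s,\dots,s)\mapsto 1$ and all other tuples (those containing a $1$) to $0$, which is precisely the $n$th cup power of $\e^{W}_{1}$ in the bar complex. With $\FF_{2}$ coefficients this identifies $[\e^{W}_{n}]$ with $t^{n}$ in $\cohog{*}{\ZZ/2\ZZ,\FF_{2}}=\FF_{2}[t]$, settling the odd case; your Bockstein route via Proposition~\ref{p:seesaw}'s consequence (the proposition immediately preceding the lemma) then handles the even case, since for $n$ odd the integral Bockstein $\cohog{n}{\ZZ/2\ZZ,\FF_{2}}\to\cohog{n+1}{\ZZ/2\ZZ,\ZZ}$ is an isomorphism.

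For part~(2), your use of the standard fact that restriction to conjugate subgroups gives the same cohomology class (rather than invoking Lemma~\ref{l:prop sign}(1), which concerns automorphisms of $(W,S)$) is the right move. After that, Lemma~\ref{l:prop sign}(3)(4) reduce everything to the diagonal in $(\ZZ/2\ZZ)^{m}$, and the conclusion follows from part~(1) since the target group $\cohog{n}{\ZZ/2\ZZ,\ZZ}$ (for $n$ even) or $\cohog{n}{\ZZ/2\ZZ,\FF_{2}}$ (for $n$ odd) is $\FF_{2}$.
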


\begin{remark} The maps $\chi_{n}$ are not compatible with the ring structure on $\ZZ[\cD]$ in general (besides the case $W=\ZZ/2\ZZ$). Therefore, in general there is no reason to expect that $\e^{W}_{n}$ be related to cup powers of the sign $\e^{W}_{1}$ in any obvious way.
\end{remark}

In the rest of the section we collect some useful formulas for computing $\e^{W}_{n}$ and more generally collapsing cocycles.

\begin{lemma}\label{l:odd1} Let $x_{1},\cdots ,x_{n}\in W$ and let $\ell\ge1$ be odd. Suppose there is a partial product of $\ell$ consecutive entries $x_{j}\cdots x_{j+\ell-1}=1$,  then $\z(x_{1},\cdots,x_{n})=0$ for any collapsing $n$-cocycle $\z\in Z^{n}(W,A)$.
\end{lemma}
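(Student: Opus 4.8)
The plan is to reduce the statement to the collapsing property via the cocycle identity, using induction on the odd length $\ell$ of the vanishing partial product. The base case $\ell=1$ is immediate: if some $x_j=1$, then $|x_{j-1}x_j|=|x_{j-1}|+0=|x_{j-1}|$ (or, if $j=1$, use $|x_1 x_2|=|x_2|$), so $\z(x_1,\ldots,x_n)=0$ directly by collapsing. (One must be slightly careful at the boundary $j=1$ or $j=n$, but in either case $x_j=1$ forces a collapsing-adjacency with a neighbor, and if $n=1$ the hypothesis $x_1=1$ combined with $\z(1)=0$ — or vacuously — handles it; in the regime $n\ge 2$ relevant to collapsing cocycles there is always a neighbor.)

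For the inductive step, suppose $\ell\ge 3$ and $x_j x_{j+1}\cdots x_{j+\ell-1}=1$. Set $y=x_{j+1}\cdots x_{j+\ell-1}=x_j^{-1}$, so $|y|=|x_j|$. The idea is to merge $x_{j+1}$ and $x_{j+2}$ using the cocycle condition and land on a tuple with a shorter odd vanishing partial product. Concretely, apply $\d\z=0$ to the $(n+1)$-tuple $(x_1,\ldots,x_j,x_{j+1},x_{j+2},\ldots,x_n)$ — wait, that has length $n$; instead apply $\d\z=0$ to an $(n+1)$-tuple obtained by splitting one entry. A cleaner route: write $x_j=x_j' s$ and $x_{j+1}=s x_{j+1}'$ where $s$ is chosen so that the lengths add up one way but not the other, then use the cocycle identity relating $\z(\ldots,x_j,x_{j+1},\ldots)$ to $\z(\ldots,x_j',s x_{j+1},\ldots)$ and $\z(\ldots,x_j' s, x_{j+1}',\ldots)$ modulo collapsing-zero terms. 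The net effect is to shrink the window of the partial product by $2$ (from $\ell$ to $\ell-2$, still odd), after possibly absorbing a simple reflection into an adjacent factor outside the window or cancelling it against its neighbor inside the window. Iterating brings us down to $\ell=1$.

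Alternatively — and this is probably the slickest path — one can work with the universal collapsing cocycle $Z^W_n$ and the geometric picture. By Theorem~\ref{th:univ} it suffices to prove the statement for $\z=Z^W_n$, equivalently to show that if $x_j\cdots x_{j+\ell-1}=1$ with $\ell$ odd, then no wall $H$ is $(C_0,\ldots,C_n)$-alternating. Set $C=x_1\cdots x_{j-1}C_0$ and consider the subsequence of chambers $C, x_j C, x_j x_{j+1} C, \ldots, x_j\cdots x_{j+\ell-1}C = C$ (using the partial product $=1$): this is a \emph{closed} walk of length $\ell$ in the chamber graph returning to its start. For $H$ to be alternating on this stretch, the walk would have to cross $H$ at every one of its $\ell$ steps, i.e.\ change sides $\ell$ times and yet return to the same side — impossible since $\ell$ is odd. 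Hence $H$ fails to be $(C_0,\ldots,C_n)$-alternating already on the sub-window, so its coefficient in $Z^W_n(\un x)$ vanishes; since this holds for all $H$, $Z^W_n(\un x)=0$.

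The main obstacle, such as it is, lies only in the bookkeeping at the boundary indices and in checking that applying $\chi_n$ (or any $W$-equivariant $a$) to $Z^W_n$ preserves the vanishing — but the latter is automatic since $a$ is a homomorphism and $Z^W_n(\un x)=0$. I therefore expect to present the geometric argument as the proof: it is a one-paragraph parity observation about closed walks in the Coxeter complex, needs no induction, and the reduction to $Z^W_n$ via Theorem~\ref{th:univ} is free. The inductive cocycle-identity argument is available as a backup but is messier precisely because of the edge cases when the vanishing window abuts position $1$ or position $n$.
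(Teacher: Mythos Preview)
Your proposal is correct, and the geometric argument you settle on is exactly the paper's proof: reduce to $Z^W_n$ via Theorem~\ref{th:univ}, then observe that the condition $x_j\cdots x_{j+\ell-1}=1$ forces the sub-chain of chambers $C_{j-1},C_j,\ldots,C_{j+\ell-1}$ to be a closed walk of odd length, which cannot be $H$-alternating for any wall $H$. Your inductive sketch is not needed and, as you note, is messier; the paper does not pursue it.
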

\begin{proof} By Theorem \ref{th:univ} it suffices to treat the case $\z=Z^{W}_{n}$.  
If $x_{j}\cdots x_{j+\ell-1}=1$, consider the sequence of $\ell+1$ chambers $(C_{j-1},C_{j}, \cdots ,C_{j+\ell-1})$. The condition $x_{j}\cdots x_{j+\ell-1}=1$ implies $C_{j+\ell-1}=C_{j-1}$. For any $H\in \cH$, this even number of chambers cannot be $H$-alternating if the first one is the same as the last one. Therefore $Z_{n}^{W}(x_{1},\cdots ,x_{n})=0$ by \eqref{Zn'}.
\end{proof}

\begin{lemma}\label{l:even1}
Let $x_{1},\cdots ,x_{n}\in W$ and let $\ell\ge2$ be even. If $x_{1}\cdots x_{\ell}=1$, then
\begin{equation}\label{inv1}
\e^{W}_{n}(x_{1},x_{2},x_{3},\cdots, x_{n})\equiv\e^{W}_{n-1}(x_{2},x_{3},\cdots, x_{n})\mod2.
\end{equation}
Similarly, if  $x_{n-\ell+1}\cdots x_{n}=1$, then 
\begin{equation}\label{inv2}
\e^{W}_{n}(x_{1},x_{2},\cdots, x_{n-1},x_{n})\equiv\e^{W}_{n-1}(x_{1},x_{2},\cdots, x_{n-1})\mod 2.
\end{equation}
\end{lemma}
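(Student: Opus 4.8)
The plan is to argue directly from the concrete description of $\e^{W}_{m}$ given just after its definition: for $\un y=(y_{1},\cdots,y_{m})\in W^{m}$ and $C^{\un y}_{i}:=y_{1}\cdots y_{i}C_{0}$ (with $C^{\un y}_{0}=C_{0}$), the quantity $\e^{W}_{m}(\un y)$ is the number of walls $H\in\cH$ for which $(C_{0},C^{\un y}_{1},\cdots,C^{\un y}_{m})$ is $H$-alternating, reduced mod $2$ when $m$ is odd. Since $n$ and $n-1$ have opposite parity, one of $\e^{W}_{n}(x_{1},\cdots,x_{n})$ and $\e^{W}_{n-1}(x_{2},\cdots,x_{n})$ is literally an integer count and the other its reduction mod $2$; hence it suffices to show the two underlying integer counts coincide. (Here and below $C_{i}=x_{1}\cdots x_{i}C_{0}$, and I assume $\ell\le n$, as is implicit in the hypothesis.)

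The key observation I would isolate first is the following. Write $v_{H}(C)\in\{0,1\}$ for the side of $H$ containing the chamber $C$, so that a sequence $(D_{0},\cdots,D_{m})$ is $H$-alternating exactly when $v_{H}(D_{i})\equiv v_{H}(D_{0})+i\pmod 2$ for all $i$. Suppose $x_{1}\cdots x_{\ell}=1$, so that $C_{\ell}=C_{0}$. If $(C_{1},C_{2},\cdots,C_{n})$ is $H$-alternating, then $v_{H}(C_{\ell})\equiv v_{H}(C_{1})+(\ell-1)\pmod 2$; since $\ell$ is even (so $\ell-1$ is odd) and $C_{\ell}=C_{0}$, this forces $v_{H}(C_{0})\neq v_{H}(C_{1})$, which is precisely the condition needed for $(C_{0},C_{1},\cdots,C_{n})$ to remain $H$-alternating. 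The converse implication is trivial, so for every wall $H$ the sequence $(C_{1},\cdots,C_{n})$ is $H$-alternating if and only if $(C_{0},C_{1},\cdots,C_{n})$ is. Consequently $\e^{W}_{n}(x_{1},\cdots,x_{n})$ counts, up to parity, exactly the walls $H$ making $(C_{1},\cdots,C_{n})$ $H$-alternating; call their number $N$.

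It then remains to identify $N$ with the count attached to the right-hand side. The chamber sequence for $(x_{2},\cdots,x_{n})$ is $(C_{0},x_{2}C_{0},x_{2}x_{3}C_{0},\cdots,x_{2}\cdots x_{n}C_{0})$, and left multiplication by $x_{1}$ carries it to $(C_{1},C_{2},\cdots,C_{n})$. Since $H\mapsto x_{1}H$ is a bijection of $\cH$ carrying $H$-alternating sequences to $x_{1}H$-alternating ones, the number of walls making the $(x_{2},\cdots,x_{n})$-sequence alternating equals $N$; hence $\e^{W}_{n-1}(x_{2},\cdots,x_{n})$ reduces mod $2$ to $N$ as well, which gives \eqref{inv1}. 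For \eqref{inv2} one repeats the argument at the other end of the tuple, or applies \eqref{inv1} to the reversed tuple $(x_{n}^{-1},\cdots,x_{1}^{-1})$ — whose first $\ell$ entries multiply to $1$ — together with the symmetry $\e^{W}_{m}(y_{1},\cdots,y_{m})=\e^{W}_{m}(y_{m}^{-1},\cdots,y_{1}^{-1})$ of Lemma \ref{l:prop sign}(2). I do not expect a genuine obstacle: the only things needing care are the mod-$2$ bookkeeping between the $\ZZ$-valued and $\FF_{2}$-valued cocycles, and the remark that the evenness of $\ell$ is exactly what makes adjoining $C_{0}$ to the front cost nothing — which is why the odd-length case collapses to zero instead (Lemma \ref{l:odd1}).
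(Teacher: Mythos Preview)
Your proof is correct and follows essentially the same route as the paper: both arguments use $C_{\ell}=C_{0}$ (with $\ell$ even) to show that $(C_{0},C_{1},\cdots,C_{n})$ is $H$-alternating if and only if $(C_{1},\cdots,C_{n})$ is, and then transport the latter by the $W$-action to the chamber sequence for $(x_{2},\cdots,x_{n})$. Your write-up spells out the parity check for the ``if'' direction more explicitly than the paper (which simply asserts the equivalence), and your symmetry argument for \eqref{inv2} via Lemma~\ref{l:prop sign}(2) is a valid alternative to the paper's ``similar'' direct argument.
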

\begin{proof}
We prove \eqref{inv1}. Since $x_{1}\cdots x_{\ell}=1$, the corresponding chain of chambers $(C_{0},C_{1},\cdots, C_{n})$ satisfies $C_{\ell}=C_{0}$. For any $H\in \cH$, $(C_{0},C_{1},\cdots, C_{n})$  is $H$-alternating if and only if $(C_{1},\cdots ,C_{n})$ is $H$-alternating, if and only if $(C_{0}, x_{2}C_{0},\cdots, x_{2}\cdots x_{n}C_{0})$ is $x_{2}H$-alternating. This implies \eqref{inv1}. The proof of \eqref{inv2} is similar.
\end{proof}

\begin{lemma}\label{l:exp} Let $x_{1},\cdots, x_{n}\in W$ and $1\le i\le n$. Let  $x_{i}=s_{1}\cdots s_{m}$ be a reduced expression. Then for any collapsing $n$-cocycle $\z\in Z^{n}(W,A)$, we  have
\begin{equation*}
\z(x_{1},\cdots,x_{n})=\sum_{j=1}^{m}\z(x_{1},\cdots, x_{i-1}s_{1}\cdots s_{j-1}, s_{j}, s_{j+1}\cdots s_{m}x_{i+1}, \cdots ,x_{n}).
\end{equation*} 
Here, if $i=1$, the first term should be $s_{j}$; if $i=n$, the last term should be $s_{j}$.
\end{lemma}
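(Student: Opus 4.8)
The plan is to induct on the length $m$ of the chosen reduced expression $x_i=s_1\cdots s_m$, the inductive step being a single ``peel off the leading letter of $x_i$'' move that I would extract from the cocycle identity for $\z$ together with the collapsing hypothesis. For $m=1$ the sum on the right has a single term, equal to $\z(x_1,\cdots,x_n)$, so there is nothing to prove.

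For $m\ge2$ write $x_i=s_1 w$ with $w=s_2\cdots s_m$; then $|s_1w|=|s_1|+|w|$ because the expression is reduced, and $w$ is a reduced expression of length $m-1$. Consider first $2\le i\le n-1$. I would apply the relation $(\d\z)(x_1,\cdots,x_{i-1},s_1,w,x_{i+1},\cdots,x_n)=0$ to the $(n+1)$-tuple obtained from $\un x$ by replacing $x_i$ with the adjacent pair $(s_1,w)$ in slots $i,i+1$. In the expansion of $\d\z$, every summand except the three involving the inserted entries $s_1$ or $w$ leaves $s_1$ and $w$ adjacent: this holds for the head term $x_1\z(x_2,\cdots,s_1,w,\cdots,x_n)$ (here $i\ge2$ is used), for every term merging two of the $x_k$'s, and for the tail term $(-1)^{n+1}\z(x_1,\cdots,s_1,w,\cdots,x_{n-1})$ (here $i\le n-1$ is used). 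Since $|s_1w|=|s_1|+|w|$, collapsing kills each of these, and what survives is
\[
(-1)^{i-1}\z(x_1,\cdots,x_{i-1}s_1,w,x_{i+1},\cdots,x_n)+(-1)^{i}\z(x_1,\cdots,x_n)+(-1)^{i+1}\z(x_1,\cdots,x_{i-1},s_1,wx_{i+1},\cdots,x_n)=0,
\]
that is, $\z(x_1,\cdots,x_n)=\z(x_1,\cdots,x_{i-1},s_1,wx_{i+1},\cdots,x_n)+\z(x_1,\cdots,x_{i-2},x_{i-1}s_1,w,x_{i+1},\cdots,x_n)$. Then I would feed the inductive hypothesis (valid since $|w|=m-1<m$) into the second summand, applied at slot $i$ with $x_{i-1}$ replaced by $x_{i-1}s_1$ and with the reduced expression $w=s_2\cdots s_m$; after reindexing $j\mapsto j-1$ this rewrites the second summand as $\sum_{j=2}^{m}\z(x_1,\cdots,x_{i-1}s_1\cdots s_{j-1},s_j,s_{j+1}\cdots s_m x_{i+1},\cdots,x_n)$, while the first summand is exactly the $j=1$ term (the empty product $s_1\cdots s_0$ being $1$). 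Adding the two gives the asserted formula.

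The case $i=n$ goes the same way, the only change being that slot $i+1$ is absent: from $(\d\z)(x_1,\cdots,x_{n-1},s_1,w)=0$, collapsing discards every summand that keeps $(s_1,w)$ adjacent and one is left with $\z(x_1,\cdots,x_n)=\z(x_1,\cdots,x_{n-1},s_1)+\z(x_1,\cdots,x_{n-2},x_{n-1}s_1,w)$ (the surviving tail term is $(-1)^{n+1}\z(x_1,\cdots,x_{n-1},s_1)$, which no longer contains an adjacent $(s_1,w)$). One iterates exactly as above, the first term being the $j=1$ summand; this is the ``last term should be $s_j$'' proviso.

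The genuinely different --- and, I expect, hardest --- case is $i=1$. Now the head term $x_1\z(x_2,\cdots,x_n)$ of $\d\z$, which was killed by collapsing when $i\ge2$, equals $s_1\cdot\z(w,x_2,\cdots,x_n)$ and survives, so the same manipulation yields $\z(x_1,\cdots,x_n)=\z(s_1,wx_2,x_3,\cdots,x_n)+s_1\cdot\z(w,x_2,\cdots,x_n)$ and, upon iterating,
\[
\z(x_1,\cdots,x_n)=\sum_{j=1}^{m}(s_1\cdots s_{j-1})\cdot\z(s_j,s_{j+1}\cdots s_m x_2,x_3,\cdots,x_n).
\]
When $A$ carries the trivial $\ZZ[W]$-action --- in particular for the higher signs $\e^{W}_{n}$ --- these prefactors act trivially and one recovers the displayed formula; in general one keeps the $W$-translations, or equivalently first reduces to the universal case $\z=Z^{W}_{n}$ via Theorem \ref{th:univ} and reads off the identity from the chamber formula \eqref{Zn'}, with the $j$th summand contributing the half-space data translated by $s_1\cdots s_{j-1}$. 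Apart from this, the only points needing care are the signs in the coboundary expansion and the (routine) verification that collapsing really does apply to every discarded term.
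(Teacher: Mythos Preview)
Your argument for $2\le i\le n$ is exactly the paper's: induct on $m$, apply $(\d\z)$ to the $(n+1)$-tuple obtained by splitting $x_i=(s_1,s_2\cdots s_m)$, kill every term that still contains the adjacent pair $(s_1,s_2\cdots s_m)$ by collapsing, and feed the inductive hypothesis into the surviving term $\z(\ldots,x_{i-1}s_1,s_2\cdots s_m,\ldots)$.

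Your treatment of $i=1$ goes further than the paper's and in fact catches a genuine oversight. The paper writes the first surviving term for $i=1$ as ``$\z(s_2\cdots s_m,x_2,\ldots,x_n)$,'' but the head term of $\d\z$ is $s_1\cdot\z(s_2\cdots s_m,x_2,\ldots,x_n)$, and the $s_1$-action cannot be dropped unless $W$ acts trivially on $A$. Your corrected formula
\[
\z(x_1,\ldots,x_n)=\sum_{j=1}^{m}(s_1\cdots s_{j-1})\cdot\z(s_j,\,s_{j+1}\cdots s_m x_2,\,x_3,\ldots,x_n)
\]
is the right statement for general $A$; one checks this already for $\z=Z^W_2$ in type $A_2$ with $x_1=s_1s_2$, where the untranslated sum lands on the wrong walls. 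The paper's applications of the lemma (Proposition~\ref{p:z3}(2) uses the middle slot $i=2$; the computations in \S5 involve only $\e^W_3$ with trivial coefficients) are unaffected, which is presumably why the point was not noticed. So: your proof is correct, it matches the paper's method, and your caveat about $i=1$ is a real correction to the stated generality.
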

\begin{proof}
We prove this by induction on $m$. For $m=0$ the statement is vacuous. Now assume the statement is proved for $m-1$.  Applying the cocycle condition $(\d\z)(x_{1},\cdots,x_{i-1}, s_{1},s_{2}\cdots s_{m}, x_{i+1},\cdots,x_{n})=0$ we get a sum of terms most of which contain two consecutive entries $s_{1}$ and $s_{2}\cdots s_{m}$, which vanishes since $\z$ is collapsing. The remaining terms are 
\begin{eqnarray*}
\z(x_{1},\cdots, x_{i-1} s_{1},s_{2}\cdots s_{m}, x_{i+1},\cdots, x_{n}) && \mbox{(if $i=1$ the first entry is $s_{2}\cdots s_{m}$)};\\
\z(x_{1},\cdots, x_{i-1}, x_{i}, x_{i+1},\cdots, x_{n});&&\\
\z(x_{1},\cdots, x_{i-1}, s_{1}, s_{2}\cdots s_{m}x_{i+1},x_{i+2},\cdots, x_{n})& &\quad \mbox{(if $i=n$, the last entry is $s_{1}$)}.
\end{eqnarray*}
The sum of the three terms above are zero. We then apply inductive hypothesis to the first term above to conclude.
\end{proof}

\section{Geometric origin of $\e^{W}_{3}$}\label{s:G}
When $W$ is a Weyl group, the cocycle $\e_{3}^{W}$ naturally shows up in the study of convolutions in certain Hecke categories. In fact this was our motivation to study collapsing cocycles for general Coxeter groups. Below we recall the geometric context.
   
\subsection{Monodromic Hecke category} In  \cite{LY} we study monodromic Hecke categories for reductive groups. Let $G$ be a connected reductive group over an algebraically closed field $k$, $B$ a Borel subgroup of $G$ with unipotent radical $U$ and $T\subset B$ a maximal torus.  Let  $\cL$ and $\cL'$  be two rank one character sheaves on $T$. In \cite{LY} we study the derived category ${}_{\cL'}\cD_{\cL}$ of complexes on $U\bs G/U$ equivariant under the left and right translation by $T$ with respect to $\cL$ and $\cL'$ respectively. These categories appear naturally in the study of representations of finite groups of Lie type. 

Now fix a $W$-orbit $\fo$ of rank one character sheaves on $T$. The direct sum $\cD_{\fo}:=\oplus_{\cL,\cL'\in\fo}({}_{\cL'}\cD_{\cL})$ carries a monoidal structure under convolution
\begin{equation}
\star: {}_{\cL''}\cD_{\cL'}\times{}_{\cL'}\cD_{\cL}\to {}_{\cL''}\cD_{\cL}.
\end{equation}
In \cite[\S4]{LY}, we gave a decomposition of $\cD_{\fo}$ into blocks. For $\cL,\cL'\in \fo$, let ${}_{\cL'}W_{\cL}=\{w\in W|w\cL=\cL'\}$. Let $W_{\cL}^{\c}$ be the subgroup of $W$ generated by reflections with respect to the roots $\a$ such that $\a^{\vee,*}\cL$ is the trivial local system on $\Gm$. Then the blocks in ${}_{\cL'}\cD_{\cL}$ are indexed by the set ${}_{\cL'}\un W_{\cL}={}_{\cL'}W_{\cL}/W^{\c}_{\cL}=W^{\c}_{\cL'}\bs {}_{\cL'}W_{\cL}$ (see \cite[\S4.1]{LY}). Moreover, each coset $\b\in {}_{\cL'}\un W_{\cL}$ contains a unique element $w^{\b}\in {}_{\cL'}\un W_{\cL}$ of minimal length (\cite[Lemma 4.2]{LY}).   

In \cite[\S4.4]{LY} we introduced a groupoid $\Xi$ whose objects set is $\fo$ and its morphism set $\Hom_{\Xi}(\cL,\cL')$ is ${}_{\cL'}\un W_{\cL}$, the set of blocks in ${}_{\cL'}\cD_{\cL}$. The multiplication on the groupoid $\Xi$ is compatible with convolution on $\cD_{\fo}$. In \cite[\S5.8]{LY} we have defined a $3$-cocycle $\s$ on the groupoid $\Xi$ that captures certain signs that appear in the associativity constraint of the convolution, which we now recall.

Let $\cL,\cL',\cL''\in \fo$, and let $\b\in {}_{\cL'}\un W_{\cL}$ and $\g\in{}_{\cL''}\un W_{\cL'}$ be blocks. Let $\dw^{\b},\dw^{\g}$ be liftings of $w^{\b},w^{\g}$ to $N_{G}(T)$. Let $\D(\dw^{\b})_{\cL}$ be the standard perverse sheaf obtained from a shifted local system on $B\dw^{\b}B$ by extension by zero (equipped with a trivialization of its stalk at $\dw^{\b}$, see \cite[\S2.11]{LY}).  By \cite[\S5.5]{LY}, there is a {\em canonical} isomorphism in ${}_{\cL''}\cD_{\cL}$
\begin{equation*}
\can_{\dw^{\g},\dw^{\b}}:\D(\dw^{\g})_{\cL'}\star \D(\dw^{\b})_{\cL}\cong \D(\dw^{\g}\dw^{\b})_{\cL}.
\end{equation*}
For another $\cL'''\in\fo$, and block $\d\in {}_{\cL'''}W_{\cL''}$ consider the following diagram of isomorphisms
\begin{equation}\label{noncomm}
\xymatrix{ & \D(\dw^{\d}\dw^{\g})_{\cL'}\star \D(\dw^{\b})_{\cL}\ar[dr]^-{\can_{\dw^{\d}\dw^{\g},\dw^{\b}}} & \\
\D(\dw^{\d})_{\cL''}\star\D(\dw^{\g})_{\cL'}\star\D(\dw^{\b})_{\cL}\ar[ur]^-{\can_{\dw^{\d},\dw^{\g}}\star\id}\ar[dr]_-{\id\star\can_{\dw^{\g},\dw^{\b}}} & & \D(\dw^{\d}\dw^{\g}\dw^{\b})_{\cL}\\
& \D(\dw^{\d})_{\cL''}\star \D(\dw^{\g}\dw^{\b})_{\cL}\ar[ur]_-{\can_{\dw^{\d},\dw^{\g}\dw^{\b}}} &}
\end{equation}
However, this diagram is not necessarily commutative. Let $\s(\dw^{\d},\dw^{\g},\dw^{\b})_{\cL}\in \Qlbar^{\times}$ be the ratio of the upper composition over the lower. It is easy to see that $\s(\dw^{\d},\dw^{\g},\dw^{\b})_{\cL}$ is independent of the choices of liftings of $w^{\b},w^{\g}$ and $w^{\d}$; therefore we may denote it by $\s(\d,\g,\b)_{\cL}$. From the pentagon axiom for the associativity of the convolution functor,  we get that $\s$ is a $3$-cocycle on the groupoid $\Xi$, i.e., $\s\in Z^{3}(\Xi,\Qlbar^{\times})$.

\subsection{} Our next goal is to relate the cocycle $\s\in Z^{3}(\Xi,\Qlbar^{\times})$ to the cocycle $\e^{W}_{3}\in Z^{3}(W,\FF_{2})$. By definition, there is a map of groupoids
\begin{equation}
\pi: \Xi\to [\pt/W]
\end{equation}
sending a  morphism $\b\in \Hom_{\Xi}(\cL,\cL')={}_{\cL'}\un W_{\cL}$ to $w^{\b}\in W$.

We  generalize the notion of a collapsing cocycle to the groupoid $\Xi$. Let $n\ge2$ and $A$ be an abelian group. A cocycle $\z\in Z^{n}(\Xi, A)$ is collapsing if, whenever $\b_{1},\cdots, \b_{n}$ are composable morphisms in $\Xi$ (so that $\b_{1}\cdots\b_{n}$ is defined), and $|w^{\b_{i}}w^{\b_{i+1}}|=|w^{\b_{i}}|+|w^{\b_{i+1}}|$ for some $1\le i\le n-1$, we have $\z(\b_{1},\cdots, \b_{n})=0$. We have the following generalization of Lemma \ref{l:xi van}.

\begin{lemma}\label{l:var xi van} Let $n\ge2$ and $\z\in Z^{n}(\Xi,A)$ be a collapsing cocycle. Suppose for any morphism $\b$ in $\Xi$ containing a simple reflection $s\in S$, we have $\z(\b^{(-1)^{n-1}},\cdots, \b^{-1}, \b)=0$.  Then $\z=0$.
\end{lemma}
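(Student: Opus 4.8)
The plan is to mimic the proof of Lemma \ref{l:xi van} as closely as possible, replacing the length function on $W$ by the pullback length $\un x\mapsto \sum_i |w^{\b_i}|$ along $\pi$, and carefully tracking what the groupoid structure of $\Xi$ contributes. First I would introduce, for a tuple $\un\b=(\b_1,\dots,\b_n)$ of composable morphisms, the total length $L(\un\b):=\sum_{i=1}^n|w^{\b_i}|$ and prove $\z(\un\b)=0$ by induction on $L(\un\b)$, exactly as before. The base case $L(\un\b)<n$ still forces some $\b_i$ to be a block of the identity object with $w^{\b_i}=1$, but now one must check that such a $\b_i$ satisfies the collapsing hypothesis with its neighbor (it does, since $|w^{\b_i}w^{\b_{i+1}}|=|w^{\b_{i+1}}|=|w^{\b_i}|+|w^{\b_{i+1}}|$); and the case $L(\un\b)=n$ reduces, via collapsing, to all $\b_i$ being blocks of the same simple reflection $s$ at a single object $\cL$ — here the new hypothesis $\z(\b^{(-1)^{n-1}},\dots,\b^{-1},\b)=0$ is exactly what we need, since for a block $\b$ of an involution $s$ one has $w^{\b^{-1}}=w^\b=s$, so $(\b^{(-1)^{n-1}},\dots,\b^{-1},\b)$ has underlying $W$-tuple $(s,\dots,s)$ and is composable.

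Next, for the inductive step with $L(\un\b)\ge n+1$, I would copy the $\mu$-invariant argument: set $\mu(\un\b)=\min\{i: |w^{\b_i}|\ge 2\}$ and do downward induction on $\mu$. The key point is that a block $\b_i$ with $|w^{\b_i}|\ge 2$ can be factored in $\Xi$: choosing a reduced expression $w^{\b_i}=s_1\cdots s_m$ and recalling from \cite{LY} that each simple reflection $s_j$ lifts to a block (of the appropriate object along the path), one writes $\b_i=\b_i'\cdot\b_i''$ where $w^{\b_i''}=s$ is a simple reflection occurring at the start of a reduced word for $w^{\b_i}$ and $|w^{\b_i'}|=|w^{\b_i}|-1$, with the factorization respecting the groupoid composition. (This is the groupoid analogue of Lemma \ref{l:exp}, or can be invoked directly.) One then applies the cocycle condition $(\d\z)(\b_1,\dots,\b_{\mu-1}, \b_\mu'', \b_\mu', \b_{\mu+1},\dots,\b_n)=0$ — wait, more carefully, $(\d\z)$ applied to the tuple obtained by splitting the $\mu$-th entry — and as in Lemma \ref{l:xi van} every term except $\z(\un\b)$ itself either has smaller total length, or is killed by collapsing, or has strictly larger $\mu$-invariant, hence vanishes by the nested induction. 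Crucially, before reaching this step one must first handle $\b_1,\dots,\b_{\mu-1}$: collapsing forces them all to be blocks of a single simple reflection $s$ at one object, and the $\b_\mu$ must begin (after factoring) with that same $s$, which is why the factorization above must be arranged to expose the correct leading simple reflection.

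The main obstacle I expect is bookkeeping the objects of the groupoid through all these manipulations: every time one splits a morphism or inserts an identity, one must verify that the source/target objects still match so that the tuple remains composable and the cocycle value is defined. In the group case $W$ this is automatic, but in $\Xi$ it requires knowing that simple reflections lift to blocks between the right character sheaves — this is supplied by the block decomposition of \cite[\S4]{LY} and Lemma 4.2 there (each minimal-length coset representative), which guarantees the needed factorizations exist within $\Xi$. A secondary subtlety is the precise form of the hypothesis $\z(\b^{(-1)^{n-1}},\dots,\b^{-1},\b)=0$: the alternating pattern of $\b$ and $\b^{-1}$ is forced by the composability requirement (consecutive morphisms must compose, so one alternates a morphism with its inverse to return to the same object), and since $s$ is an involution all the underlying Weyl group elements are $s$, matching the role played by $\xi(s,\dots,s)=0$ in Lemma \ref{l:xi van}. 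Once the composability is checked, the combinatorial heart of the argument — the cancellation in the expansion of $\d\z$ — is formally identical to that of Lemma \ref{l:xi van} and requires no new ideas.
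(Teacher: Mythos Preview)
Your proposal is correct and follows essentially the same approach as the paper's proof sketch: induct on $L(\un\b)=\sum_i|w^{\b_i}|$, use the $\mu$-invariant for the downward induction inside each length, and identify the only genuinely new ingredient as the ability to factor a morphism $\b$ in $\Xi$ along a reduced expression of $w^\b$. The paper pins this down by citing \cite[Lemma 4.5(3)(4)]{LY} (rather than Lemma 4.2, which only gives existence of minimal-length representatives): what is needed is that every partial product $s_{i_{j'}}\cdots s_{i_j}$ of a reduced word for $w^\b$ is itself the minimal-length element of its block, so that the factorization $\b=\b_{[N,N]}\circ\b_{[1,N-1]}$ lives in $\Xi$ with the correct lengths; you should adjust the citation accordingly.
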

\begin{proof}[Proof sketch] The proof is almost identical as that of Lemma \ref{l:xi van}. For composable morphisms $\un\b=(\b_{1},\cdots, \b_{n})$ in $\Xi$, we prove the vanishing of $\z(\b_{1},\cdots, \b_{n})$ by induction on $L(\un \b):=\sum_{i=1}^{n}|w^{\b_{i}}|$. The only nontrivial point is to make sense of reduced word decomposition of a morphism in $\Xi$. Let $\b\in \Hom_{\Xi}(\cL,\cL')$, and  $w^{\b}=s_{i_{N}}s_{i_{n-1}}\cdots s_{i_{1}}$  a reduced expression in $W$. Let $\cL_{j}=s_{i_{j}}\cdots s_{i_{1}}\cL$. Then for any $1\le j< j'\le N$, the partial product $s_{i_{j'}}\cdots s_{i_{j}}$ is the minimal length element in its coset $\b_{[j,j']}\in {}_{\cL_{j'}}\un W_{\cL_{j}}$, i.e., $w^{\b_{[j,j']}}=s_{i_{j'}}\cdots s_{i_{j}}$. This follows from \cite[Lemma 4.5(3)(4)]{LY}.  In particular, we may write $\b=\b_{[N,N]}\circ \b_{[1,N-1]}$ and $w^{\b}=s_{i_{N}}w^{\b_{[1,N-1]}}$ so that $|w^{\b_{N}}|=N=1+|w^{\b_{[1,N-1]}}|$. This decomposition is used in the induction step. 
\end{proof}

\begin{theorem} The cocycle $\s$ takes values in $\{\pm1\}$. If we identify $\{\pm1\}$ with $\FF_{2}$, we have $\s=\pi^{*}\e^{W}_{3}$. 
\end{theorem}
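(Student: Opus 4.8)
The plan is to verify that $\sigma$ satisfies the two characterizing properties of $\pi^{*}\e^{W}_{3}$ afforded by the generalized uniqueness statement, Lemma \ref{l:var xi van}. First I would show $\sigma$ takes values in $\{\pm1\}$: the isomorphisms $\can_{\dw,\dw'}$ are defined up to the explicit normalization coming from the chosen stalk trivializations at $\dw^{\b}$, and the only ambiguity in forming the square \eqref{noncomm} is the sign introduced when one reduced-word lift of a product differs from another by an element of $T[2]$ acting on the relevant stalk. More precisely, $\can_{\dw^{\gamma},\dw^{\b}}$ is canonical once liftings are fixed, but the composite $\can_{\dw^{\d}\dw^{\gamma},\dw^{\b}}\circ(\can_{\dw^{\d},\dw^{\gamma}}\star\id)$ versus $\can_{\dw^{\d},\dw^{\gamma}\dw^{\b}}\circ(\id\star\can_{\dw^{\gamma},\dw^{\b}})$ differ precisely by the Tits cocycle discrepancy, which lies in $T[2]$; evaluating its action on the one-dimensional stalk gives a sign. (This should already be recorded in \cite[\S5.8]{LY}; I would cite it rather than re-derive it.)

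Next I would check that $\pi^{*}\e^{W}_{3}$ makes sense and that $\sigma$ viewed in $Z^{3}(\Xi,\FF_{2})$ is collapsing \emph{in the sense of the groupoid version}. Collapsing is the key structural input: when $|w^{\b}w^{\gamma}|=|w^{\b}|+|w^{\gamma}|$, the canonical isomorphism $\can_{\dw^{\gamma},\dw^{\b}}$ is an honest isomorphism of standard perverse sheaves with no sign correction — this is exactly the content of \cite[\S5.5]{LY} in the length-additive case — so the relevant portion of the square \eqref{noncomm} commutes strictly and $\sigma(\d,\gamma,\b)_{\cL}=1$, i.e.\ $=0$ in $\FF_{2}$. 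The same length-additivity makes $\e^{W}_{3}(w^{\d},w^{\gamma},w^{\b})=0$ by the collapsing property of $\e^{W}_{3}$ (Theorem \ref{th:sign}(1)), so $\pi^{*}\e^{W}_{3}$ is collapsing on $\Xi$ as well.

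Then I would compute both cocycles on the ``diagonal'' input $(\b^{(-1)^{n-1}},\dots,\b^{-1},\b)$ with $n=3$, i.e.\ on $(\b,\b^{-1},\b)$, for a morphism $\b$ whose minimal-length representative $w^{\b}$ is a simple reflection $s$. Here $w^{\b^{-1}}=s$ as well, and one must track the sign produced by the failure of associativity in convolving $\D(\dot s)\star\D(\dot s)\star\D(\dot s)$ — where $\D(\dot s)\star\D(\dot s)$ is no longer a standard sheaf (length is not additive, since $ss=1$), so a genuine sign appears. I expect the computation to yield $\sigma(s,s,s)=-1$, matching $\e^{W}_{3}(s,s,s)=1\in\FF_{2}$ from Theorem \ref{th:sign}(2); this is where the actual calculation in \cite{LY} with the $\SL_{2}$-model $\ph_{s}:\SL_{2}\to G$ and the Tits element $\t(s)=\ph_{s}\bigl(\begin{smallmatrix}0&1\\-1&0\end{smallmatrix}\bigr)$, whose square is $\alpha^{\vee}(-1)\in T[2]$, enters. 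Having matched both cocycles on collapsing inputs (both zero) and on the diagonal simple-reflection inputs, Lemma \ref{l:var xi van} applied to $\z=\sigma-\pi^{*}\e^{W}_{3}$ forces $\sigma=\pi^{*}\e^{W}_{3}$.

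\textbf{Main obstacle.} The delicate point is the diagonal computation $\sigma(s,s,s)=-1$: one has to unwind the definition of $\can$ precisely enough to see that the sign is exactly the nontrivial one, rather than merely ``some element of $T[2]$ acting trivially or not''. Concretely this means knowing how $T[2]$ (through which the Tits-section discrepancy $\t(s)^{2}=\alpha^{\vee}(-1)$ acts) acts on the stalk of the relevant convolution $\D(\dot s)\star\D(\dot s)$ at the identity, and checking this action is by $-1$ rather than by $+1$ for the character sheaf $\cL$ in question. I would lean on the explicit rank-one $\SL_{2}$ computation in \cite[\S5]{LY} for this, since it is essentially a statement about the standard sheaf on $\PP^{1}=B\bs\SL_{2}$; the Coxeter-group side is then automatic from Theorem \ref{th:sign} and the fact that a simple reflection generates a copy of $\ZZ/2\ZZ$ on which $[\e^{W}_{3}]\ne0$ by Lemma \ref{l:prod}.
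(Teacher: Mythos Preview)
Your overall strategy is the paper's: show $\sigma$ is collapsing, compute $\sigma(\b,\b^{-1},\b)=-1$ for $\b$ containing a simple reflection via the $\SL_2$ calculation in \cite[Example~5.7]{LY}, and then invoke Lemma~\ref{l:var xi van} on $\z=\sigma-\pi^*\e^W_3$. Two points deserve correction.

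First, your argument for $\{\pm1\}$-valuedness via a Tits $2$-cocycle in $T[2]$ is off target. The ratio $\sigma(\d,\g,\b)$ is \emph{not} the action of a Tits discrepancy on a stalk; the paper already records (just before the theorem) that $\sigma(\dw^\d,\dw^\g,\dw^\b)_\cL$ is independent of the chosen liftings, so there is no $T[2]$-ambiguity to exploit, and the Tits $2$-cocycle plays no role here. In the paper $\{\pm1\}$-valuedness is not argued separately: once collapsing and the diagonal values $\sigma(\b,\b^{-1},\b)=-1$ are established, it follows immediately (for instance by applying Lemma~\ref{l:var xi van} with $A=\Qlbar^{\times}/\{\pm1\}$ to the image of $\sigma$, or equivalently by noting that the induction in that lemma's proof expresses every value as a product of diagonal values).

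Second, and more substantively, your collapsing argument is incomplete. Saying that $\can_{\dw^\g,\dw^\b}$ is the ``honest'' convolution isomorphism in the length-additive case does \emph{not} by itself make the square \eqref{noncomm} commute: you must still compare $\can_{\dw^\d\dw^\g,\dw^\b}\circ(\can_{\dw^\d,\dw^\g}\star\id)$ with $\can_{\dw^\d,\dw^\g\dw^\b}$, and neither of these is length-additive in general. The paper does not defer this to \cite{LY}; it gives a direct geometric argument. It realizes the stalk of the triple convolution at $\dw^\d\dw^\g\dw^\b$ as $\cohoc{*}{Y(w^\d,w^\g,w^\b),\cF}$ for an explicit variety $Y(w^\d,w^\g,w^\b)$, and identifies the two compositions in \eqref{noncomm} with the restriction maps $i_1^*,i_2^*$ to two closed subvarieties $Y(w^\d\# w^\g,w^\b)$ and $Y(w^\d,w^\g\# w^\b)$, each isomorphic to $\AA^d$ with $d=\tfrac12(|w^\d|+|w^\g|+|w^\b|-|w^\d w^\g w^\b|)$; the cocycle $\sigma$ is the ratio $i_1^*/i_2^*$. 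When $|w^\d w^\g|=|w^\d|+|w^\g|$ the first subvariety is all of $Y(w^\d,w^\g,w^\b)$, hence for dimension reasons so is the second, so both $i_j^*$ are the identity and $\sigma=1$. This geometric step is the real content of the collapsing claim and is what your sketch is missing.
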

\begin{proof} We first check that $\s$ is collapsing. For this we look more closely into the geometry behind the cocycle $\s$. Let $X$ be the flag variety of $G$, and $\bO(w)\subset X\times X$ be the $G$-orbit containing $(B,\Ad(w)B)$.  For any sequence of elements $w_{n},\cdots ,w_{1}\in W$, let $X(w_{n},\cdots ,w_{1})\subset X^{n}$ be the variety classifying sequences of Borel subgroups $(B_{1},\cdots ,B_{n})$ such that $(B, B_{1})\in \bO(w_{1})$, and $(B_{i},B_{i+1})\in \bO(w_{i+1})$ for $1\le i\le n-1$. For composable morphisms $\d,\g$ and $\b$ in $\Xi$, let $X(w^{\d}\#w^{\g},w^{\b})\subset X(w^{\d},w^{\g},w^{\b})$ be the closed subvariety where $(B_{1},B_{3})\in\bO(w^{\d}w^{\g})$; let $X(w^{\d},w^{\g}\#w^{\b})\subset X(w^{\d},w^{\g},w^{\b})$ be the closed subvariety where $(B,B_{2})\in\bO(w^{\g}w^{\b})$. We have a diagram
\begin{equation*}
\xymatrix{ & X(w^{\d}\#w^{\g},w^{\b}) \ar@{_{(}->}[dl]_{i_{1}}\ar[r]^{p_{1}} & X(w^{\d}w^{\g},w^{\b})\ar[dr]^{q_{1}}\\
X(w^{\d},w^{\g},w^{\b}) & & & X\\
& X(w^{\d},w^{\g}\#w^{\b}) \ar@{_{(}->}[ul]_{i_{2}}\ar[r]^{p_{2}} & X(w^{\d},w^{\g}w^{\b})\ar[ur]^{q_{2}}
}
\end{equation*}
The maps are the obvious inclusion and forgetful maps.  We take the fiber of all spaces in the above diagram over the point $B_{3}=\Ad(w^{\d}w^{\g}w^{\b})B\in X$, and name them $Y(?)$ instead of $X(?)$. Then the stalk of $\D(\dw^{\d})_{\cL''}\star\D(\dw^{\g})_{\cL'}\star\D(\dw)_{\cL}$ at $\dw^{\d}\dw^{\g}\dw^{\b}$ is $\cohoc{*}{Y(w^{\d},w^{\g},w^{\b}),\cF[|w^{\d}|+|w^{\g}|+|w^{\b}|]}$ for some rank one local system $\cF$ constructed from $\cL'',\cL'$ and $\cL$. The restriction of $\cF$ to $Y(w^{\d}\# w^{\g},w^{\b})$ and $Y(w^{\d},w^{\g}\#w^{\b})$ are canonically isomorphic to $\Qlbar$. By \cite[\S5.3]{LY}, the maps $q_{1}p_{1}$ and $q_{2}p_{2}$ are affine space bundles of dimension $d=\frac{1}{2}(|w^{\d}|+|w^{\g}|+|w^{\b}|-|w^{\d}w^{\g}w^{\b}|)$ when restricted to the point $\Ad(w^{\d}w^{\g}w^{\b})B$. Therefore $Y(w^{\d}\#w^{\g},w^{\b})$ and $Y(w^{\d},w^{\g}\#w^{\b})$ are isomorphic to $\AA^{d}$. The fact that $\can_{\dw^{\d},\dw^{\g}}$ and $\can_{\dw^{\g},\dw}$ are isomorphisms implies that the restriction maps 
\begin{eqnarray*}
i^{*}_{1}: \cohoc{-|w^{\d}w^{\g}w^{\b}|}{Y(w^{\d},w^{\g},w^{\b}),\cF[|w^{\d}|+|w^{\g}|+|w^{\b}|]}\to \cohoc{2d}{Y(w^{\d}\# w^{\g},w^{\b}), \Qlbar}\cong \Qlbar\\
i^{*}_{2}:\cohoc{-|w^{\d}w^{\g}w^{\b}|}{Y(w^{\d},w^{\g},w^{\b}),\cF[|w^{\d}|+|w^{\g}|+|w^{\b}|]}\to \cohoc{2d}{Y(w^{\d}, w^{\g}\#w^{\b}), \Qlbar}\cong \Qlbar
\end{eqnarray*}
are isomorphisms. The number $\s(\d,\g,\b)$ is the ratio of these two isomorphisms. 

Suppose $|w^{\d}|+|w^{\g}|=|w^{\d}w^{\g}|$, then $Y(w^{\d}\# w^{\g},w^{\b})=Y(w^{\d},w^{\g},w^{\b})$, therefore $Y(w^{\d}, w^{\g}\#w^{\b})=Y(w^{\d},w^{\g},w^{\b})$ as well for dimension reasons. In this case, both $i^{*}_{1}$ and $i^{*}_{2}$ are the identity maps, hence $\s(\d,\g,\b)=1$. The case $|w^{\g}|+|w^{\b}|=|w^{\g}w^{\b}|$ is proved similarly.  This proves that $\s$ is collapsing.

In \cite[Example 5.7]{LY}, we have shown for $G=\SL_{2}$, and a block $\b$ containing the nontrivial element in the Weyl group $W=S_{2}$, we have  $\s(\b,\b^{-1},\b)=-1$. This implies for general $G$, and any block $\b$ containing a simple reflection $s\in S$,  we have $\s(\b,\b^{-1},\b)=-1$ since the calculation in this case reduces to the $\SL_{2}$ example by the homomorphism $\SL_{2}\to G$ corresponding to $s$. Now we identify $\{\pm1\}$ with $\FF_{2}$. We have  seen that $\z=\s-\pi^{*}\e^{W}_{3}$  is collapsing in $Z^{3}(\Xi, \FF_{2})$ and satisfies $\z(\b,\b^{-1},\b)=0$ for any morphism $\b$ in $\Xi$ containing a simple reflection. By Lemma \ref{l:var xi van}, $\z$ vanishes identically, hence  $\s=\pi^{*}\e^{W}_{3}$.
\end{proof}

\begin{remark} A similar sign is observed by Bezrukavnikov and Riche \footnote{Private communication.} in the usual Hecke category. Let $\D(s)$ and $\nb(s)$ be the $!$ and $*$ extensions of $\Qlbar[1]$ from the Schubert cell $X(s)\subset X=G/B$.  Let $\d$ be the skyscraper sheaf at the point stratum $X(1)=\{B\}$. There are canonical isomorphisms $\D(s)\star\nb(s)\cong\d\cong\nb(s)\star\D(s)$. However, the resulting two isomorphisms
\begin{eqnarray}
\label{D1} (\D(s)\star \nb(s))\star \D(s)\cong \d\star\D(s)=\D(s),\\
\label{D2} \D(s)\star (\nb(s)\star \D(s))\cong \D(s)\star\d=\D(s)
\end{eqnarray} 
differ by a sign. Indeed, the stalk of $\D(s)\star \nb(s)\star \D(s)$ along $X(s)$ can be identified with
\begin{equation*}
\cohoc{*}{\AA^{2}, j_{*}\Qlbar[3]}
\end{equation*}
where $j:\AA^{2}-C\incl \AA^{2}$, and $C=\{(x,y)\in\AA^{2}|xy=1\}$. The two isomorphisms \eqref{D1} and \eqref{D2} correspond to the two restrictions which are isomorphisms
\begin{eqnarray*}
\res_{x}: \cohoc{*}{\AA^{2}, j_{*}\Qlbar[3]}\isom \cohoc{*}{\AA^{1}_{x}, \Qlbar[3]}\cong \Qlbar[1](-1),\\
\res_{y}: \cohoc{*}{\AA^{2}, j_{*}\Qlbar[3]}\isom \cohoc{*}{\AA^{1}_{x}, \Qlbar[3]}\cong \Qlbar[1](-1).
\end{eqnarray*}
Here $\AA^{1}_{x}$ and $\AA^{1}_{y}$ are the coordinate lines $y=0$ and $x=0$. The involution $\s:(x,y)\mapsto (y,x)$ then interchanges $\res_{x}$ and $\res_{y}$. To see the sign, it suffices to show that $\s^{*}$ acts on $\cohoc{2}{\AA^{2}, j_{*}\Qlbar}$ by $-1$. Using the triangle $\Qlbar\to j_{*}\Qlbar\to i_{*}\Qlbar[-1](-1)$, where $i:C\incl \AA^{2}$, we get $\cohoc{2}{\AA^{2}, j_{*}\Qlbar}\cong \cohoc{1}{C,\Qlbar}(-1)$. Now $C\cong\Gm$ and $\s$ restricts to the inversion map on $C$, which acts by $-1$ on $\cohog{1}{C}$ and $\cohoc{1}{C}$. This is the source of the sign.
\end{remark}

\section{Restriction of $\e^{W}_{3}$ to $\Om$} In this section, for $W$ an irreducible Weyl group,  we calculate the restriction of $\e^{W}_{3}$ to certain abelian subgroups. Such restriction naturally shows up in \cite{LY} as the twisting data for the monoidal structure of monodromic Hecke categories, and our calculation here shows that the twisting is often nontrivial.

Below we write $\e^{W}_{3}$ simply as $\e_{3}$.  We first collect some useful properties of $\e_{3}$.
\begin{prop}\label{p:z3} Let $(W,S)$ be any Coxeter group and  $x,y,z\in W$.
\begin{enumerate}
\item If $y=s\in S$, then
\begin{equation*}
\e_{3}(x,s,z)=\begin{cases} 1 & \mbox{if $|xs|<|x|$ and $|sz|<|z|$}; \\ 0 & \mbox{otherwise.}\end{cases}
\end{equation*}
\item Let $y=s_{1}\cdots s_{m}$ be a reduced expression, then 
\begin{equation*}
\e_{3}(x,y,z)=\sum_{i=1}^{m}\e_{3}(xs_{1}\cdots s_{i-1}, s_{i}, s_{i+1}\cdots s_{m}z).
\end{equation*}
\item We have $\e_{3}(x,y,z)=0$ if $xyz=1$.
\item We have $\e_{3}(y^{-1},y,z)=\frac{1}{2}(|y|+|z|-|yz|)\mod2$ and $\e_{3}(x,y,y^{-1})=\frac{1}{2}(|x|+|y|-|xy|)\mod2$.
\item We have $\e_{3}(x,x^{-1},x)=|x|\mod2$.
\item We have $\e_{3}(x,y,z)=\e_{3}(y,z,(xyz)^{-1})=\e_{3}(z,(xyz)^{-1},x)=\e_{3}((xyz)^{-1},x,y)$.
\end{enumerate}
\end{prop}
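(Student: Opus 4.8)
The plan is to dispose of parts (2)--(5) by direct appeal to the lemmas of \S\ref{s:signs}, and to reserve the real work for part (1) (a base-case computation) and part (6) (a cyclic-symmetry argument). Throughout I use the concrete chamber/wall description recorded in \S\ref{s:signs}: for a triple $(x_{1},x_{2},x_{3})$ and $C_{i}=x_{1}\cdots x_{i}C_{0}$, the value $\e_{3}(x_{1},x_{2},x_{3})$ is the parity of the number of walls $H\in\cH$ for which $(C_{0},C_{1},C_{2},C_{3})$ is $H$-alternating; and I use that $\e_{3}^{W}$ is collapsing, being the homomorphic image of the collapsing cocycle $Z_{3}^{W}$.

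For parts (2)--(5) there is essentially nothing to do. Part (2) is exactly Lemma \ref{l:exp} applied to $\z=\e_{3}^{W}$ with $n=3$ and $i=2$. For part (3), $xyz=1$ exhibits a partial product of $\ell=3$ (odd) consecutive entries equal to $1$, so Lemma \ref{l:odd1}, valid for every collapsing $3$-cocycle, gives $\e_{3}(x,y,z)=0$. For part (4), take $\ell=2$ in Lemma \ref{l:even1}: formula \eqref{inv1} with $(x_{1},x_{2})=(y^{-1},y)$ gives $\e_{3}(y^{-1},y,z)\equiv\e_{2}^{W}(y,z)\bmod 2$, and formula \eqref{inv2} with $(x_{2},x_{3})=(y,y^{-1})$ gives $\e_{3}(x,y,y^{-1})\equiv\e_{2}^{W}(x,y)\bmod 2$; now substitute the already-computed value $\e_{2}^{W}(a,b)=\frac{1}{2}(|a|+|b|-|ab|)$. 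Part (5) is the case $y\mapsto x^{-1}$ of the second formula in (4): $\e_{3}(x,x^{-1},x)\equiv\frac{1}{2}(|x|+|x^{-1}|-|xx^{-1}|)=|x|\bmod 2$.

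For part (1): the chambers $C_{1}=xC_{0}$ and $C_{2}=xsC_{0}$ are adjacent, separated by the single wall $xH_{s}$ and by no other, so the only wall that can occur in the alternating count is $H=xH_{s}$; in particular $\e_{3}(x,s,z)\in\{0,1\}$. This $H$ contributes iff it additionally separates $C_{0}$ from $C_{1}$ and separates $C_{2}$ from $C_{3}$. Translating the first condition by $x^{-1}$ and the second by $(xs)^{-1}$ (using $sH_{s}=H_{s}$), these say ``$H_{s}$ separates $C_{0}$ from $x^{-1}C_{0}$'' and ``$H_{s}$ separates $C_{0}$ from $zC_{0}$''. Since $H_{s}$ separates $C_{0}$ from $vC_{0}$ iff $|sv|<|v|$ (equivalently $s\in\bT_{v}$, cf.\ the remark in \S\ref{s:signs}), and $|sx^{-1}|=|xs|$, $|x^{-1}|=|x|$, these conditions become precisely $|xs|<|x|$ and $|sz|<|z|$, giving the stated value.

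The heart of the matter is part (6), which I reduce to the single identity $\e_{3}(a,b,c)=\e_{3}(b,c,(abc)^{-1})$ valid for all $a,b,c\in W$; the four-term statement then follows by iterating it three times, checking along the way (with $w=(xyz)^{-1}$, hence $yzw=x^{-1}$ and $zwx=y^{-1}$) that the successive last entries come out as $(xyz)^{-1}$, then $x$, then $y$. To prove the identity, put $w=(xyz)^{-1}$ and $D_{0}=C_{0},\ D_{1}=xC_{0},\ D_{2}=xyC_{0},\ D_{3}=xyzC_{0}$; since $xyzw=1$ the sequence $(D_{0},D_{1},D_{2},D_{3},D_{0})$ is a closed loop of chambers, so for any wall $H$, writing $\epsilon_{i}\in\{0,1\}$ for whether $D_{i}$ and $D_{i+1}$ lie on opposite sides of $H$ (indices mod $4$), we have that $\epsilon_{0}+\epsilon_{1}+\epsilon_{2}+\epsilon_{3}$ is even because $v_{H}$ returns to its starting value around the loop. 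Hence $(D_{0},D_{1},D_{2},D_{3})$ is $H$-alternating, i.e.\ $\epsilon_{0}=\epsilon_{1}=\epsilon_{2}=1$, exactly when all four $\epsilon_{i}$ equal $1$. On the other hand the chambers computing $\e_{3}(y,z,w)$ are $(C_{0},yC_{0},yzC_{0},x^{-1}C_{0})$, and applying the isometry $x$, which permutes $\cH$ via $H\mapsto xH$, carries this sequence to $(D_{1},D_{2},D_{3},D_{0})$; so $\e_{3}(y,z,w)$ counts the walls $H$ with $\epsilon_{1}=\epsilon_{2}=\epsilon_{3}=1$, which by the same parity remark again forces all four $\epsilon_{i}$ to equal $1$. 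The two wall-sets therefore coincide on the nose, so the counts agree and hence so do their parities. The only step calling for genuine care is this bookkeeping in (6) — keeping straight which chamber sequence computes each of the four terms and exploiting the closed-loop parity observation; parts (1)--(5) ride on the lemmas already in hand.
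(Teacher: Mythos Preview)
Your proof is correct. Parts (1)--(5) match the paper's argument (you simply unpack ``follows from definition'' in (1) more explicitly, which is fine).

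For part (6) you take a genuinely different route. The paper argues algebraically: expand the cocycle relation $(\delta\e_{3})(x,y,z,(xyz)^{-1})=0$ in $\FF_{2}$; of the five terms, the three ``middle'' ones $\e_{3}(xy,z,w)$, $\e_{3}(x,yz,w)$, $\e_{3}(x,y,zw)$ all vanish by part (3) (each is a triple whose product is $1$), leaving $\e_{3}(x,y,z)=\e_{3}(y,z,w)$. Your approach instead works directly with the chamber picture: the closed loop $(D_{0},D_{1},D_{2},D_{3},D_{0})$ forces the parity constraint $\epsilon_{0}+\epsilon_{1}+\epsilon_{2}+\epsilon_{3}\equiv0$, so ``three consecutive $\epsilon_{i}$'s equal $1$'' is the same condition no matter where you start the window, and the $x$-translation identifies the wall-sets exactly. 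The paper's argument is slicker once (3) is in hand and generalizes immediately to any collapsing $3$-cocycle (not just $\e_{3}^{W}$); your argument is self-contained and shows the stronger statement that the two wall-sets literally coincide, not merely have the same parity.
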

\begin{proof}
(1) follows from definition. (2) is a special case of Lemma \ref{l:exp}.



(3) follows from Lemma \ref{l:odd1}. (4) follows from Lemma \ref{l:even1} and (5) is a special case of (4). 

(6) Apply the cocycle condition $(\d\e_{3})(x,y,z,(xyz)^{-1})=0$, and using (3) to get the first equality; the others are obtained by iterating the first one.

\end{proof}

\subsection{The group $\Om$} For the rest of the notes, let $W$ be a Weyl group.  Let $G,T,B$ be as in \S\ref{ss:not}, with the extra assumption that $G$ is simple of adjoint type. Let $\tilW=\L\rtimes W$ be the extended affine Weyl group attached to $G$, where $\L=\xcoch(T)$. Let $A$ be the corresponding fundamental alcove in $\xcoch(T)_{\RR}$, and let $\Om\subset \tilW$ be the stabilizer of $A$. The map $i: \Om\subset \tilW\to W$ is injective, and the conjugacy class of $i$ is independent of the choice of $B$ (it will depend on $G$ and not just $W$, as we shall see by comparing type $B$ and type $C$).  

The finite abelian group $\Om_{\cL}$ in \cite{LY} is a subgroup of $\Om$. We would like to compute the cohomology classes of the pullback of $\e_{3}$ to $\Om$ as well as to subgroups of $\Om$, which play a role in the statement of \cite[Theorem 10.12]{LY}. This question is non-vacuous only when $\#\Om$ is even, which happens for types $A_{n}$ ($n$ odd), $B_{n}$, $C_{n}$, $D_{n}$ and $E_{7}$.

\subsection{Method} To compute $[\e_{3}|_{\Om}]$ we need to compute $i$. Let $D$ be the Dynkin diagram of $G$, and $\wt D$ be the extended Dynkin diagram of $G$.  It is well-known that $\Om\cong \Aut(\wt D)/\Aut(D)$. Also $\Aut(D)=\Aut(\wt D,\a_{0})$. We shall compute the projection $\wt i: \Aut(\wt D)\to W$.  Let $\t\in \Aut(\wt D)$, then it is a permutation of affine simple roots. It is then easy to write down the matrix for the action of $\t$ on $V=\xch(T)_{\QQ}=\Span_{\QQ}\{\a_{s}|s\in S\}$. If $\t(\a_{s})=\a_{0}$, then its action on $V$ should read $\t(\a_{s})=-\th=-\sum_{s}n_{s}\a_{s}$.  

Another general principle: if $X\subset W$, let $\Phi(X)=\{\a\in \Phi(G,T)|\a\bot V^{X}\}$, and let $W(X)\subset W$ be the subgroup generated by reflections attached to $\a\in \Phi(X)$. Then $X\subset W(X)$ and $W(X)$ is conjugate to a standard parabolic subgroup of $W$.   This allows us to reduce the calculation to smaller Coxeter groups.

In the next subsections we compute $[\e_{3}|_{\Om}]$ case by case.
\subsection{Type $A_{n-1}$, $n$ even}\label{ss:A}
We have $\Om\cong\ZZ/n\ZZ\subset S_{n}$ generated by the cyclic permutation $C=s_{1}s_{2}\cdots s_{n-1}$.  We claim that $[\e_{3}|_{\Om}]$ is the nontrivial class in $\cohog{3}{\Om,\FF_{2}}=\FF_{2}$. Let $n=2m$.

Suppose in the contrary that $\e_{3}|_{\Om}$ is a cobounbary, i.e., $\e_{3}=\d\l$ for some function $\l: \Om\times\Om\to \FF_{2}$, then the fact that $\e_{3}$ is normalized shows that $\l(x,1)=\l(1,1)=\l(1,x)$ for all $x\in \Om$. We may assume $\l(1,1)=0$ for otherwise we can change $\l$ to $\l+1$ and its coboundary is still $\e_{3}$. Therefore we assume $\l$ is also normalized, i.e., $\l(1,x)=\l(x,1)=0$ for all $x\in \Om$. 

We have $\e_{3}(C^{m},C^{m},C)=\e_{3}(C,C^{m},C^{m})$ by Prop. \ref{p:z3}(4).  Since $\e_{3}=\d\l$, we get
\begin{eqnarray*}
\e_{3}(C^{m},C^{m},C)=\l(C^{m},C)+\l(C^{m},C^{m+1})+\l(C^{m},C^{m})\\
\e_{3}(C,C^{m},C^{m})=\l(C,C^{m})+\l(C^{m+1},C^{m})+\l(C^{m},C^{m}).
\end{eqnarray*}
The equality of the LHS of the above two equations implies
\begin{equation}\label{4 lam}
\l(C^{m},C)+\l(C,C^{m})+\l(C^{m},C^{m+1})+\l(C^{m+1},C^{m})=0.
\end{equation}
However, the above is equal to $\e_{3}(C^{m},C,C^{m})$, which we now show is nonzero. 

The element $C$ is the cyclic permutation: $1\mapsto 2\mapsto \cdots\mapsto n\mapsto 1$. The involution $C^{m}$ interchanges $i$ and $i+m$ for $1\le i\le m$, and hence $|C^{m}|=m^{2}$. The permutation $C^{m}s_{1}\cdots s_{m-1}$ is $j\mapsto m+j+1$ for $1\le j\le m-1$, $m\mapsto m+1$, $m+j\mapsto  j$ for $0<j\le m$. We see that $|C^{m}s_{1}\cdots s_{m-1}|=m^{2}+m-1=|C^{m}|+m-1$. Similarly, $|s_{m+1}\cdots s_{n-1}C^{m}|=m-1+|C^{m}|$. By Prop. \ref{p:z3}(1)(2), we see
\begin{equation}\label{z3Cm}
\e_{3}(C^{m},C,C^{m})=\sum_{j=1}^{n-1}\e_{3}(C^{m}s_{1}\cdots s_{j-1}, s_{j}, s_{j+1}\cdots s_{n-1}C^{m}).
\end{equation}
The automorphism of $(W,S)$ sending $s_{i}$ to $s_{n-i}$ sends the triple $(C^{m}s_{1}\cdots s_{j-1}, s_{j}, s_{j+1}\cdots s_{n-1}C^{m})$ to $(C^{m}s_{1}\cdots s_{n-j-1}, s_{n-j}, s_{n-j+1}\cdots s_{n-1}C^{m})$, which have the same value under $\e_{3}$ by Lemma \ref{l:prop sign}(1). Therefore the RHS of \eqref{z3Cm} reduced to one term $\e_{3}(C^{m}s_{1}\cdots s_{m-1}, s_{m}, s_{m+1}\cdots s_{n-1}C^{m})$. Now $|C^{m}s_{1}\cdots s_{m}|=m^{2}+m-2=|C^{m}s_{1}\cdots s_{m-1}|-1$, and similarly $|s_{m}\cdots s_{n-1}C^{m}|=|s_{m+1}\cdots s_{n-1}C^{m}|-1$. By Prop. \ref{p:z3}(1),   $\e_{3}(C^{m}s_{1}\cdots s_{m-1}, s_{m}, s_{m+1}\cdots s_{n-1}C^{m})=1$, therefore $\e_{3}(C^{m},C,C^{m})\ne0$, which contradicts \eqref{4 lam}. Therefore $[\e_{3}|_{\Om}]$ is the nontrivial element in $\cohog{3}{\Om,\FF_{2}}$.

For any subgroup $\Om'\subset \Om$, the restriction $[\e_{3}|_{\Om'}]\in\cohog{3}{\Om',\FF_{2}}$ is nontrivial if and only if $[\Om:\Om']$ is odd.


\subsection{Type $B_{n}$}
In this case, $\Om=\ZZ/2\ZZ$, with nontrivial element conjugate to the simple reflection $s_{n}: (x_{1},\cdots, x_{n})\mapsto (x_{1},x_{2}\cdots, -x_{n})$. Therefore $[\e_{3}|_{\Om}]\in \cohog{3}{\Om,\FF_{2}}=\FF_{2}$ is always the nontrivial element.

\subsection{Type $C_{n}$}
In this case, $\Om=\ZZ/2\ZZ$, the nontrivial element is the involution $\t: (x_{1},\cdots, x_{n})\mapsto (-x_{n},\cdots, -x_{1})$. We have $\Phi(\t)=\{\e_{1}+\e_{n}, \e_{2}+\e_{n-1},\cdots\}$ (if $n$ is odd, $2\e_{(n+1)/2}\in \Phi(\t)$) consisting of orthogonal roots. Therefore $W(\t)$ is of type $A_{1}^{[(n+1)/2]}$, and $\t$ is the product of nontrivial elements from each factor. By Lemma \ref{l:prod}(2), $[\e_{3}|_{\Om}]=[\frac{n+1}{2}]\mod 2\in \cohog{3}{\Om,\FF_{2}}=\FF_{2}$. Therefore, 
\begin{equation*}
[\e_{3}|_{\Om}]\begin{cases} =0 & n\equiv0,3\mod4 \\ \ne0 & n\equiv1,2\mod4.\end{cases}
\end{equation*}

\subsection{Type $D_{n}$, $n$ odd}
In this case $\Om\cong \ZZ/4\ZZ$. The three nontrivial elements in $i(\Om)\subset W$ are $\om,\om^{2}, \om^{3}$:
\begin{eqnarray*}
\om(x_{1},\cdots, x_{n})=(x_{n},-x_{n-1},\cdots, -x_{2}, -x_{1}),\\
\om^{2}(x_{1},\cdots, x_{n})=(-x_{1},x_{2},\cdots, x_{n-1}, -x_{n}),\\
\om^{3}(x_{1},\cdots, x_{n})=(-x_{n},-x_{n-1},\cdots, -x_{2}, x_{1}).
\end{eqnarray*}
Write $n=2m+1$. We see that $W(\Om)$ is of type $D_{3}\times A_{1}^{m-1}\cong A_{3}\times A_{1}^{m-1}$, and $\om$ is $(C, s,\cdots, s)$ where $C\in S_{4}$ is a Coxeter element, and $s\in S_{2}$ is the nontrivial element. Let $p:\Om\to \ZZ/2\ZZ$ be the projection, then by Lemma \ref{l:prop sign},

\begin{equation*}
\e_{3}|_{\Om}\sim (m-1)p^{*}\e_{3}^{S_{2}}+\e_{3}^{S_{4}}|_{\Om}.
\end{equation*}
Note that $p^{*}$ induces zero on cohomology in degrees $>1$, therefore
\begin{equation*}
\e_{3}|_{\Om}\sim \e_{3}^{S_{4}}|_{\Om}
\end{equation*}
which we know is nontrivial by \S\ref{ss:A}.
 

The unique nontrivial subgroup of $\Om$ is $\j{\om^{2}}$. Since the restriction map $\cohog{3}{\ZZ/4\ZZ,\FF_{2}}\to \cohog{3}{\ZZ/2\ZZ,\FF_{2}}$ is trivial, $\e_{3}|_{\j{\om^{2}}}$ is cohomologically trivial. 

\subsection{Type $D_{n}$, $n$ even}
In this case $\Om\cong\ZZ/2\ZZ\times\ZZ/2\ZZ$. The three nontrivial elements in $i(\Om)\subset W$ are:
\begin{eqnarray*}
\om_{1}(x_{1},\cdots, x_{n})=(x_{n},-x_{n-1},\cdots, -x_{2}, x_{1}),\\
\om_{2}(x_{1},\cdots, x_{n})=(-x_{1},x_{2},\cdots, x_{n-1}, -x_{n})\\
\om_{3}(x_{1},\cdots, x_{n})=(-x_{n},-x_{n-1},\cdots, -x_{2}, -x_{1}).
\end{eqnarray*}
We identify $\Om$ with $\ZZ/2\ZZ\times\ZZ/2\ZZ$ by $\om_{1}\mapsto (1,0), \om_{2}\mapsto(1,1)$ and $\om_{3}\mapsto(0,1)$. Let $p_{1},p_{2}:\Om\to \ZZ/2\ZZ$ be the projection to the first and second factors and let $p_{3}=p_{1}+p_{2}:\Om\to \ZZ/2\ZZ$. 

Write $n=2m$. We see that $W(\Om)$ is of type $D_{2}\times A_{1}^{m-1}\cong  A_{1}^{m+1}$. We write $D_{2}$ for the first two factors of $A_{1}$. Then
\begin{equation*}
\om_{1}=(s,1,s,\cdots); \quad \om_{2}=(s,s,1,\cdots); \quad \om_{3}=(1,s, s,\cdots)
\end{equation*}
where $s$ denotes the nontrivial element in each $A_{1}$-factor of $W(\Om)$. Each factor of $A_{1}$ corresponds to a projection $\pi_{i}: \Om\to S_{2}=\ZZ/2\ZZ$, $i=1,\cdots, m+1$, then $\e_{3}|_{\Om}$ is cohomologous to $\sum_{i=1}^{m+1}\pi_{i}^{*}\e_{3}^{S_{2}}$. Since $\pi_{1}=p_{1}$, $\pi_{2}=p_{2}$, $\pi_{3}=\cdots =\pi_{m+1}=p_{3}$. We thus get
\begin{equation}\label{Dev}
\e_{3}|_{\Om}\sim p_{1}^{*}(\e_{3}^{S_{2}})+p_{2}^{*}(\e_{3}^{S_{2}})+(m-1)p_{3}^{*}(\e_{3}^{S_{2}}).
\end{equation}
Let $\y\in\cohog{1}{\ZZ/2\ZZ,\FF_{2}}$ be the nontrivial element, then $\{\y^{a}\ot \y^{3-a}|0\le a\le 3\}$ gives a basis for $\cohog{3}{\Om,\FF_{2}}$. Then \eqref{Dev} implies
\begin{equation*}
[\e_{3}|_{\Om}]=m(\y^{3}\ot1)+(m-1)(\y^{2}\ot \y)+(m-1)(\y\ot \y^{2})+m(1\ot \y^{3})\in \cohog{3}{\Om,\FF_{2}}.
\end{equation*}
In particular, $[\e_{3}|_{\Om}]$ is always nontrivial.

We have three nontrivial subgroups in $\Om$, generated by $\om_{i}$, $i=1,2,3$. We see that
\begin{equation*}
[\e_{3}|_{\j{\om_{1}}}]=m\y^{3}, \quad [\e_{3}|_{\j{\om_{2}}}]=0, \quad [\e_{3}|_{\j{\om_{3}}}]=m\y^{3}.
\end{equation*}


\subsection{Type $E_{7}$} In this case, $\Om=\ZZ/2\ZZ$, and the nontrivial element $\t\in \Om$ has $\dim V^{\t}=4$. Therefore, $W(\t)$ is a rank $3$ parabolic subgroup of $W$ and $\t$ is the longest element in $W(\t)$ that acts by $-1$ on the $3$-dimensional reflection representation of $W(\t)$. Now $W(\t)$ is simply-laced since $W$ is, it implies that $W(\t)$ is of type $A_{1}^{3}$, therefore $\t$ is conjugate to a product of three commuting simple reflections in $W$. By Lemma \ref{l:prod}(2),  $[\e_{3}|_{\Om}]\in \cohog{3}{\Om,\FF_{2}}=\FF_{2}$ is the nontrivial element.


\subsection*{Acknowledgement} 
The author would like to thank G.Lusztig for the collaboration on \cite{LY} from which the cocycle $\e_{3}^{W}$ was discovered, and for his helpful comments on the draft. He also thanks R.Bezrukavnikov, P.Etingof and D.Nadler for stimulating discussions.


\begin{thebibliography}{99}

\bibitem{B}
Bourbaki, N. 
\'El\'ements de math\'ematique. Fasc. XXXIV. Groupes et alg\`ebres de Lie. Chapitre IV-VI. Actualit\'es Scientifiques et Industrielles, No. 1337 Hermann, Paris 1968, 288 pp. 


\bibitem{LY}
Lusztig, G.; Yun,Z.
Endoscopy for Hecke categories and character sheaves. arXiv:1904.01176.
\end{thebibliography}
\end{document}